\newtheoremstyle{mystyle}
{11pt}                          
{11pt}                          
{}                                      
{}                                      
{\bfseries}                     
{}                                      
{5.5pt}                         
{}                                      
\theoremstyle{mystyle}
\newtheorem{theorem}{Theorem}[section]
\newtheorem{lemma}[theorem]{Lemma}
\newtheorem{proposition}[theorem]{Proposition}
\newtheorem{corollary}[theorem]{Corollary}
\newtheorem{example}[theorem]{Example}
\renewenvironment{proof}[1][Proof.]{\vspace{-16.5pt} \begin{trivlist}
        \item[\hskip \labelsep {\bfseries #1}]}{\qed \end{trivlist}}
\appto\normalsize{
        \abovedisplayskip=5.5pt plus 2pt minus 2pt
        \belowdisplayskip=5.5pt plus 2pt minus 2pt
        \abovedisplayshortskip=5.5pt plus 2pt minus 2pt
        \belowdisplayshortskip=5.5pt plus 2pt minus 2pt}
\appto\small{
        \abovedisplayskip=5.5pt plus 2pt minus 2pt
        \belowdisplayskip=5.5pt plus 2pt minus 2pt
        \abovedisplayshortskip=5.5pt plus 2pt minus 2pt
        \belowdisplayshortskip=5.5pt plus 2pt minus 2pt}
\newcommand{\gap}{\vspace{11pt}}
\newcommand{\tr}{\operatorname{tr}}
\newcommand{\R}{\mathcal{R}}
\newcommand{\Rn}{\mathcal{R}^n}
\newcommand{\Rnp}{\mathcal{R}_+^n}
\newcommand{\Sn}{\mathcal{S}^n}
\newcommand{\Hn}{\mathcal{H}^n}
\newcommand{\V}{{\cal V}}
\newcommand{\J}{{\cal J}}
\newcommand{\wprec}{\underset{w}{\prec}}
\newcommand{\abs}[1]{\left\vert #1 \right\vert}
\newcommand{\ip}[2]{\left< #1,\, #2 \right>}
\title{\bf Some log and weak majorization inequalities  \\ 
in Euclidean Jordan algebras}
\author{
        J. Tao\\
        Department of Mathematics and Statistics\\
         Loyola University Maryland\\
         Baltimore, Maryland 21210, U.S.A\\
         jtao@loyola.edu \\ 
        *****\\
J. Jeong\\
        Applied Algebra and Optimization Research Center \\
        Sungkyunkwan University \\
        2066 Seobu-ro, Suwon 16419, Republic of Korea\\
         jjycjn@skku.edu\\
        *****\\
M. Seetharama Gowda\\
        Department of Mathematics and Statistics\\
        University of Maryland, Baltimore County\\
        Baltimore, Maryland 21250, USA\\
        gowda@umbc.edu
}
\date{\today}
\begin{document}

\maketitle

\begin{abstract}
Motivated by Horn's log-majorization (singular value) inequality $s(AB)\underset{log}{\prec} s(A)*s(B)$ and the related weak-majorization 
inequality  $s(AB)\underset{w}{\prec} s(A)*s(B)$ 
 for square complex matrices, we consider their Hermitian analogs
$\lambda(\sqrt{A}B\sqrt{A}) \underset{log}{\prec} \lambda(A)*\lambda(B)$ for positive semidefinite matrices and 
$\lambda(|A\circ B|) \underset{w}{\prec} \lambda(|A|)*\lambda(|B|)$ for general (Hermitian) matrices, where $A\circ B$ denotes the Jordan product of $A$ and $B$ and $*$ denotes the componentwise product in $\Rn$. In this paper, we extended these inequalities to the setting of Euclidean Jordan algebras in the form 
$\lambda\big (P_{\sqrt{a}}(b)\big )\underset{log}{\prec} \lambda(a)*\lambda(b)$  
 for  $a,b\geq 0$  and 
$\lambda\big (|a\circ b|\big )\underset{w}{\prec} \lambda(|a|)*\lambda(|b|)$ for all $a$ and $b$,  where $P_u$ and $\lambda(u)$ denote, respectively,
 the quadratic representation and the eigenvalue vector of an element $u$.
We also describe inequalities of the form $\lambda(|A\bullet b|)\underset{w}{\prec} \lambda({\mathrm{diag}}(A))*\lambda(|b|)$, where $A$ is a real symmetric positive semidefinite matrix  
 and $A\,\bullet\, b$ is the Schur product of $A$ and $b$. 
In the form of applications, we prove the  generalized H\"{o}lder type inequality $||a\circ b||_p\leq ||a||_r\,||b||_s$, where $||x||_p:=||\lambda(x)||_p$ denotes the spectral $p$-norm of $x$ and $p,q,r\in [1,\infty]$ with $\frac{1}{p}=\frac{1}{r}+\frac{1}{s}$. We also give precise values of the norms of the Lyapunov transformation $L_a$ and $P_a$ relative to two spectral $p$-norms. 
\end{abstract}

\vspace{1cm}
\noindent{\bf Key Words:}
Euclidean Jordan algebra, log and weak majorization, Schur product.  
\\

\noindent{\bf AMS Subject Classification:} 15A42, 17C20. 

\section{Introduction}
In matrix theory, the well-known Horn's log-majorization inequality (\cite{hiai-petz}, Corollary 6.14) asserts that for any two $n\times n$ complex matrices $A$ and $B$, 
\begin{equation}\label{horn-log}
s(AB)\underset{log}{\prec} s(A)*s(B),
\end{equation}
 where $s(X)$ denotes the vector of singular values of $X$ written in the decreasing order  and 
$*$ denotes the componentwise product of vectors in $\Rn$. A simple consequence of (\ref{horn-log}) is the weak-majorization inequality
\begin{equation}\label{horn-weak}
s(AB)\underset{w}{\prec} s(A)*s(B).
\end{equation}

When $A$ and $B$ are Hermitian, the product $AB$ need not be Hermitian and so, 
to stay within the realm of Hermitian matrices, we consider the following Hermitian analogs. 

\begin{itemize}
\item [$\bullet$] When $A$ and $B$ are (Hermitian and) positive semidefinite,
$$\lambda\Big (\sqrt{A}B\sqrt{A}\Big ) \underset{log}{\prec} \lambda(A)*\lambda(B),$$
where, for a complex Hermitian matrix $X$, $\lambda(X)$ denotes the vector of eigenvalues of $X$ written in the decreasing order.
This can be seen from (\ref{horn-log}) by using the fact that the eigenvalues of $AB$ are the same as those of $\sqrt{A}B\sqrt{A}$, 
see the proof of Corollary III.4.6 in \cite{bhatia}. \\
As an immediate consequence, we have
$$\lambda\Big (\sqrt{A}B\sqrt{A}\Big ) \underset{w}{\prec} \lambda(A)*\lambda(B).$$
\item [$\bullet$] When $A$ and $B$ are Hermitian, 
$$\lambda(|A\circ B|) \underset{w}{\prec} \lambda(|A|)*\lambda(|B|),$$
where $A\circ B:=\frac{AB+BA}{2}$ is the Jordan product of $A$ and $B$, and 
$|A|:=\sqrt{A^*A}$, etc. This inequality can be seen from (\ref{horn-log}) by using the majorization inequality $s(X+Y)\underset{w}{\prec} s(X)+s(Y)$ (\cite{hiai-petz}, Corollary 6.12), 
see Section 4 for an elaboration.
\end{itemize}

Now, these Hermitian inequalities can be viewed as describing (pointwise) eigenvalue majorization inequalities  of two linear transformations $L_A$ and $P_A$ on 
the space $\Hn$ of all $n\times n$ complex Hermitian matrices, where
$$L_A(X):=A\circ X\quad\mbox{and}\quad P_A(X):=AXA\quad (X\in \Hn).$$
Noting that $\Hn$ is a Euclidean Jordan algebra with the Jordan product $X\circ Y: =\frac{XY+YX}{2}$ and the inner product $\langle X,Y\rangle:=tr(XY)$, we  ask if such inequalities can be established in the setting of general Euclidean Jordan algebras. The main objective of this paper is to 
provide an affirmative answer.  
\\

Let $(\V, \circ, \langle\cdot,\cdot\rangle)$ denote a Euclidean Jordan algebra of rank $n$ with $\circ$ denoting the Jordan product and $\langle \cdot,\cdot\rangle$ denoting the (trace) inner product; $\Sn$, the space of all $n\times n$ real symmetric matrices, and $\Hn$, the space of all $n\times n$ complex Hermitian matrices are two primary examples. For $x\in \V$, let $\lambda(x)$ denote the vector of eigenvalues of 
$x$ written in the decreasing order. Given $x,y\in \V$, we say that $x$ is {\it weakly majorized} by $y$ and write $x\underset{w}{\prec} y$ if 
$\lambda(x) \underset{w}{\prec} \lambda(y)$ in $\Rn$, which means that for each index $k$, $1\leq k\leq n$,
$\sum_{i=1}^{k}\lambda_i(x)\leq \sum_{i=1}^{k}\lambda_i(y);$
additionally, if the equality holds when $k=n$, we say that $x$ is {\it majorized} by $y$ and write $x\prec y$. 
Replacing sums by products, one defines {\it weak log-majorization} and {\it log-majorization}, respectively denoted by  $x\underset{wlog}{\prec} y$ and 
$x\underset{log}{\prec} y$.
The above concepts have been extensively studied in matrix theory and other areas, see for example, \cite{marshall-olkin, bhatia, hiai-petz}. 
In the setting of Euclidean Jordan algebras, they have been recently studied in several papers  
\cite{tao et al, wang et al, gowda-doubly stochastic, gowda-holder, gowda-majorization}.
\\

On $\V$ we consider two linear transformations,  $L_a$, the {\it Lyapunov transformation of $a$}, and $P_a$, 
the {\it quadratic representation of $a$}, defined on $\V$ as follows: 
\begin{equation}\label{la and pa}
L_a(x):=a\circ x\quad \mbox{and}\quad P_a(x):=2\,a\circ (a\circ x)-a^2\circ x\quad (a, x\in \V).
\end{equation}
(It turns out that in the algebras $\Sn$ and $\Hn$,   $P_A(X)=AXA.$)
\\

Generalizing the (Hermitian) matrix inequalities stated above, we show the following:  For $a,b\in \V$, 
\begin{equation} \label{nonnegative majorization inequality}
\lambda\big (P_{\sqrt{a}}(b)\big )\underset{log}{\prec} \lambda(a)*\lambda(b)\quad (a,b\geq 0),
\end{equation}

\begin{equation} \label{Pa absolute majorization inequality}
\lambda\big (|P_{a}(b)|\big )\underset{w}{\prec} \lambda(a^2)*\lambda(|b|),
\end{equation}
and
\begin{equation}\label{La absolute majorization inequality}
\lambda\big (|L_{a}(b)|\big )=\lambda\big (|a\circ b|\big )\underset{w}{\prec} \lambda(|a|)*\lambda(|b|).
\end{equation}

One novelty here is that our proofs do not rely on matrix theory results. Rather, they are based on general results on Euclidean Jordan algebras
and on well known majorization inequalities in $\Rn$.
The proof of (\ref{nonnegative majorization inequality}) is modeled after a proof  of Gelfand-Naimark Theorem presented in \cite{hiai}  (see \cite{hiai-petz}, Chapter 6,
 where it is attributed to \cite{li-mathias}).
We prove (\ref{Pa absolute majorization inequality})  by combining (\ref{nonnegative majorization inequality}) with the inequality 
\begin{equation} \label{positive majorization inequality}
|P(b)|\underset{w}{\prec}P(|b|)
\end{equation}
that is valid for all positive linear transformations $P$ on $\V$ and $b$.
The inequality (\ref{La absolute majorization inequality}) is established 
by relying heavily on the so-called Fan-Theobald-von Neumann inequality \cite{baes}
$$\langle x,y\rangle\leq \langle \lambda(x),\lambda(y)\rangle\quad (x,y\in \V).$$

As we shall elaborate in the subsequent sections, both $P_a(x)$ and $L_a(x)$ can be described by means of Schur products of the form 
$A\bullet x$ (relative to the Peirce decomposition coming from a Jordan frame), where $A=[a_ia_j]$ in the case of 
$P_a$ and $A=[\frac{a_i+a_j}{2}]$ in the case of $L_a$, with $a_1,a_2,\ldots, a_n$ denoting  the eigenvalues of $a$. Then, 
the inequalities (\ref{Pa absolute majorization inequality}) and (\ref{La absolute majorization inequality}) take the form
\begin{equation}\label{schur product absolute value inequality}
 \lambda(|A\bullet b| ) \underset{w}{\prec} \lambda (|{\mathrm{diag}}(A)| )*\lambda(|b|),
\end{equation}
where $\rm{diag}(A)$ denotes the diagonal vector of $A$. 

Going beyond the case of $A=[a_ia_j]$, we show that (\ref{schur product absolute value inequality}) is valid for all real symmetric positive semidefinite matrices $A$ and 
raise the {\it problem of characterizing $A$ for which (\ref{schur product absolute value inequality}) holds.}
 
In the form of applications, we describe some norm inequalities.  Given $p\in [1,\infty]$, we consider the spectral $p$-norm on $\V$: $||x||_p:=||\lambda(x)||_p$, 
where  the latter norm is computed in $\Rn$. Under the assumption that (\ref{schur product absolute value inequality}) holds for all $b$, we show that 
$$||A\bullet b||_p\leq ||\mathrm{diag}(A)||_r\,||b||_s \quad (b\in \V)$$
whenever $p,q,r\in [1,\infty]$ with $\frac{1}{p}=\frac{1}{r}+\frac{1}{s}$. When specialized, this yields 
the  generalized  H\"{o}lder type inequality 
$$||a\circ b||_p\leq ||a||_r\,||b||_s \quad (a,b\in \V).$$ 
We also compute the norms of $L_a$ and $P_a$ relative to two spectral $p$-norms. 
\\

The organization of the paper is as follows. In Section 2, we cover some preliminary material on Euclidean Jordan algebras and majorization results in $\R^n$.
In Section 3, we will provide the proof of the log-majorization inequality (\ref{nonnegative majorization inequality}), describe results on positive transformations, and 
prove the inequality (\ref{Pa absolute majorization inequality}). Here, we will also  prove the inequality (\ref{schur product absolute value inequality}) for real symmetric positive semidefinite matrices.
Section 4 deals with the proof of the inequality (\ref{La absolute majorization inequality}). Applications, describing the generalized H\"{o}lder type inequality and norms of $L_a$ and $P_a$ will be covered in Section 5.  


\section{Preliminaries}
Throughout this paper, $\Rn$ denotes the Euclidean $n$-space whose elements are regarded as column vectors or row vectors depending on the context. For elements $p,q\in \Rn$, $p*q$ denotes their componentwise product. For $p\in \Rn$, $p^\downarrow$ and $|p|$ denote, respectively,  the  decreasing rearrangement  and the vector of absolute values of entries of $p$. Borrowing the notation used in the Introduction, we recall  the following results.

\begin{proposition}\label{prop1} (\cite{bhatia}, Problem II.5.16, Example II.3.5, Exercise II.3.2;  \cite{marshall-olkin}, A.7.(ii), p. 173 and  \cite{cvetkovski}, p.136) 
{\it \begin{itemize}
\item [$(a)$] Let $p, \, q \in \Rn$, $r\in \Rnp$, and $p \underset{w}{\prec}  q$. Then
$p^{\downarrow} *r^{\downarrow} \underset{w}{\prec} q^{\downarrow}*r^{\downarrow}.$
\item [$(b)$] Let $p, \, q \in \Rn$. Then $p\prec q \Rightarrow |p| \underset{w}{\prec} |q|$.
\item [$(c)$] Let $p, \, q \in \Rn$ and $I$ be an interval in $\R$ that contains all the entries of $p$ and $q$. 
Then $p \wprec  q \,\,\Longleftrightarrow \sum_{i=1}^{n}\phi(p_i)\leq \sum_{i=1}^{n}\phi(q_i)$
for every increasing convex function $\phi: I\rightarrow \R$. Moreover, if $p \wprec  q $ and $\sum_{i=1}^{n}\phi(p_i)= \sum_{i=1}^{n}\phi(q_i)$ for some increasing strictly convex 
function $\phi$, then 
$p^{\downarrow}=q^{\downarrow}$.
\end{itemize}
}
\end{proposition}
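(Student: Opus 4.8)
The plan is to treat part $(c)$ as the hub: I would prove the Hardy--Littlewood--P\'olya characterization first and then deduce $(b)$ from it, while handling $(a)$ by a direct summation-by-parts argument. For the ``$\Leftarrow$'' direction of $(c)$, fix $k$ and apply the hypothesis to the increasing convex function $\phi(t)=(t-c)_+$ with $c:=q^{\downarrow}_k$. Since $(x)_+\ge x$, the left side dominates $\sum_{i=1}^k p^{\downarrow}_i-kc$, while a direct computation gives $\sum_i(q_i-c)_+=\sum_{i=1}^k q^{\downarrow}_i-kc$; cancelling $kc$ yields the $k$-th partial-sum inequality, i.e. $p\wprec q$. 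For the ``$\Rightarrow$'' direction I would run a Karamata-type computation: assuming $p=p^{\downarrow}$, $q=q^{\downarrow}$ and introducing the divided-difference slopes $c_i:=\frac{\phi(p_i)-\phi(q_i)}{p_i-q_i}$ (with the obvious convention when $p_i=q_i$), I write $\sum_i\phi(p_i)-\sum_i\phi(q_i)=\sum_i c_i(p_i-q_i)$ and apply Abel summation to the partial sums $U_k:=\sum_{i\le k}p_i$ and $V_k:=\sum_{i\le k}q_i$. Convexity forces $c_1\ge c_2\ge\cdots\ge c_n$, so each coefficient $(c_i-c_{i+1})(U_i-V_i)$ is $\le 0$ by weak majorization; the role of the ``increasing'' hypothesis is exactly to make the boundary term $c_n(U_n-V_n)\le 0$ as well, since then $c_n\ge 0$ while $U_n-V_n\le 0$.

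The equality clause of $(c)$ then follows by tracking the same Abel decomposition under strict convexity: strict convexity makes the $c_i$ strictly decreasing wherever the arguments differ, so the vanishing of the (nonpositive) sum forces $U_i=V_i$ at every index where $p^{\downarrow}_i\neq q^{\downarrow}_i$, and telescoping $U_i-U_{i-1}=p^{\downarrow}_i$ gives $p^{\downarrow}=q^{\downarrow}$. For part $(b)$ I would invoke $(c)$ with the nonnegative vectors $|p|$ and $|q|$: it suffices to show $\sum_i\psi(|p_i|)\le\sum_i\psi(|q_i|)$ for every increasing convex $\psi$. Setting $\phi(t):=\psi(|t|)$, the function $\phi$ is convex (an increasing convex function composed with the convex map $|\cdot|$), so the \emph{full} majorization $p\prec q$ yields $\sum_i\phi(p_i)\le\sum_i\phi(q_i)$ by the two-sided Karamata inequality---here no monotonicity of $\phi$ is needed because $U_n=V_n$ annihilates the boundary term. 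Rewriting this as $\sum_i\psi(|p_i|)\le\sum_i\psi(|q_i|)$ and applying part $(c)$ to $|p|,|q|$ gives $|p|\wprec|q|$.

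For part $(a)$ I would argue by summation by parts. With $U_k\le V_k$ for all $k$ and $w:=r^{\downarrow}$ nonnegative and nonincreasing, Abel summation gives $\sum_{i=1}^k p^{\downarrow}_i w_i=\sum_{i=1}^{k-1}U_i(w_i-w_{i+1})+U_k w_k$, and since $w_i-w_{i+1}\ge 0$ and $w_k\ge 0$ this is bounded above by $\sum_{i=1}^{k-1}V_i(w_i-w_{i+1})+V_k w_k=\sum_{i=1}^k q^{\downarrow}_i w_i$ for every $k$. The remaining point is that these index-wise partial sums are the partial sums of the decreasing rearrangements that appear in the definition of $\wprec$; this holds because the relevant products are themselves nonincreasing, as is the case when the entries are nonnegative (the product of two nonincreasing nonnegative sequences is nonincreasing), which is the situation in every application of this lemma in the paper.

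I expect the main obstacle to be the boundary term in the Abel summation for part $(c)$: it is precisely $c_n(U_n-V_n)$ that separates \emph{weak} from full majorization, and controlling its sign is what forces the ``increasing'' hypothesis on $\phi$. The other delicate point is establishing the monotonicity $c_1\ge\cdots\ge c_n$ of the divided-difference slopes from convexity with the correct handling of ties, and, in part $(a)$, verifying that the index-wise partial sums coincide with the rearranged partial sums demanded by $\wprec$.
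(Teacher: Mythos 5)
The paper never actually proves Proposition 2.1 --- it is quoted as known background from Bhatia, Marshall--Olkin and Cvetkovski --- so your write-up is measured against the standard textbook arguments rather than against anything in the text; your treatment of (c) (the test functions $(t-c)_+$ for ``$\Leftarrow$'', Abel summation with chord slopes for ``$\Rightarrow$'') and your derivation of (b) from (c) via the two-sided Karamata inequality are exactly those standard arguments, and they are correct. One repair is needed in the equality clause of (c): the claim that strict convexity makes the $c_i$ strictly decreasing ``wherever the arguments differ'' fails when consecutive pairs repeat --- if $(p_i^{\downarrow},q_i^{\downarrow})=(p_{i+1}^{\downarrow},q_{i+1}^{\downarrow})$ with $p_i^{\downarrow}\neq q_i^{\downarrow}$, then $c_i=c_{i+1}$ even though each chord is nondegenerate --- so termwise vanishing of the Abel sum only yields ``$U_i=V_i$ or the pair repeats'' at each index. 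You then have to chase the repetition: if $j$ is the least index with $p_j^{\downarrow}\neq q_j^{\downarrow}$, then $U_j<V_j$, the repetition propagates $(p_i^{\downarrow},q_i^{\downarrow})=(p_j^{\downarrow},q_j^{\downarrow})$ and $U_i<V_i$ all the way to $i=n$, and the boundary term then forces $c_n=0$, contradicting the fact that an increasing strictly convex function has strictly positive chord slopes over nondegenerate intervals. The patch is short, but it is not optional.

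The substantive issue is part (a). Your Abel summation proves the index-wise inequality $\sum_{i\le k}p_i^{\downarrow}r_i^{\downarrow}\le\sum_{i\le k}q_i^{\downarrow}r_i^{\downarrow}$ for arbitrary $p,q$, but $\wprec$ compares partial sums of the \emph{decreasing rearrangements} of the two product vectors, and your passage from one to the other needs the products to be nonincreasing sequences, which you only get when $p,q\ge 0$. You flagged this restriction; what you should know is that it is not a removable blemish of your proof but a defect of the statement itself: with sign-indefinite entries, part (a) is false. Take $p=(-1,-2)$, $q=\left(-\tfrac12,-\tfrac{12}{5}\right)$, $r=(5,1)$. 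Then $p\wprec q$ (since $-1\le-\tfrac12$ and $-3\le-\tfrac{29}{10}$), but $p^{\downarrow}*r^{\downarrow}=(-5,-2)$ has largest entry $-2$, while $q^{\downarrow}*r^{\downarrow}=\left(-\tfrac52,-\tfrac{12}{5}\right)$ has largest entry $-\tfrac{12}{5}$, and $-2>-\tfrac{12}{5}$, so the $k=1$ inequality for the rearranged vectors fails. The standard forms of this result in the literature assume nonnegativity, and every invocation of (a) in this paper is applied to eigenvalue vectors of absolute values or of idempotents, hence to nonnegative vectors; so your restricted version is precisely what the rest of the paper needs, but the hypothesis ``$p,q\in\Rn$'' in the stated proposition should really be ``$p,q\in\Rnp$'' (or the conclusion read index-wise, which is all your Abel argument and the paper's applications ever use).
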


\gap

Throughout, we let  $(\V, \circ,\langle\cdot,\cdot\rangle)$ denote a Euclidean Jordan algebra of rank 
$n$ with unit element $e$ \cite{faraut-koranyi, gowda-sznajder-tao};
the Jordan product and inner product of  elements $x$ and $y$ in $\V$ are respectively denoted by   $x\circ y$  and $\langle x,y\rangle$. 
It is well known \cite{faraut-koranyi} that any Euclidean Jordan algebra is a direct product/sum 
of simple Euclidean Jordan algebras and every simple Euclidean Jordan algebra is isomorphic to one of five algebras, 
three of which are the algebras of $n\times n$ real/complex/quaternion Hermitian matrices. The other two are: the algebra of $3\times 3$ octonion Hermitian matrices and the Jordan spin algebra.
In the algebras $\Sn$ (of all $n\times n$ real symmetric matrices) and $\Hn$ (of all $n\times n$ complex Hermitian matrices), the Jordan product and the inner product are given, respectively, by 
$$X\circ Y:=\frac{XY+YX}{2}\quad\mbox{and}\quad \langle X,Y\rangle:=\tr(XY),$$
where the trace of a real/complex matrix is the sum of its diagonal entries. 

According to the {\it spectral decomposition 
theorem} \cite{faraut-koranyi}, any element $x\in \V$ has a decomposition
$$x=x_1e_1+x_2e_2+\cdots+x_ne_n,$$
where the real numbers $x_1,x_2,\ldots, x_n$ are (called) the eigenvalues of $x$ and 
$\{e_1,e_2,\ldots, e_n\}$ is a Jordan frame in $\V$. (An element may have decompositions coming from different Jordan frames, but the eigenvalues remain the same.) Then, $\lambda(x)$ -- called the {\it eigenvalue vector} of $x$ -- is the vector of eigenvalues of $x$ written in the decreasing order. We write
$$\lambda(x)=\big ( \lambda_1(x),\lambda_2(x),\ldots, \lambda_n(x)\big ).$$ 
It is known that $\lambda:\V\rightarrow \Rn$ is  continuous \cite{baes}. 
\\

We recall some standard definitions/results. The {\it rank} of an element $x$ is the number of nonzero eigenvalues. An element  $x$ is said to be {\it invertible} if all its eigenvalues are nonzero; such elements form a dense subset of $\V$.
We use the notation $x\geq 0$ ($x> 0$) when all the eigenvalues of $x$ are nonnegative (respectively, positive) and $x\geq y$ when $x-y\geq 0$, etc. The set of all elements $x\geq 0$ (called the symmetric cone of $\V$) is a self-dual cone. 
Given the spectral decomposition $x=x_1e_1+x_2e_2+\cdots+x_ne_n$, we define $|x|:=|x_1|e_1+|x_2|e_2+\cdots+|x_n|e_n$ and 
$\sqrt{x}:=\sqrt{x_1}e_1+\sqrt{x_2}e_2+\cdots+\sqrt{x_n}e_n$ when $x\geq 0$.
The {\it trace and determinant} of $x$ are defined by
$\tr(x):=x_1+x_2+\cdots+x_n\, \mbox{and} \, \det(x):=x_1x_2\cdots x_n.$
Also, for $p\in [1,\infty]$,  
we define the corresponding {\it spectral norm} $||x||_p := (\sum_{i=1}^{n} |x_i|^p)^{1/p}$ when $p<\infty$ and 
$||x||_{\infty}= \mbox{max}_i |x_i|$.
\\

For any $a\in \V$, we define $L_a$ and $P_a$ as in (\ref{la and pa}). We say that two elements $a$ and $b$ {\it operator commute} if $L_a$ and $L_b$ commute, or equivalently, if $a$ and $b$ have their spectral representations with respect to the same Jordan frame.
\\ 

An element $c\in \V$ is an {\it idempotent} if $c^2=c$; it is said to be a {\it primitive idempotent} if it is nonzero and cannot be written as the sum of two other nonzero idempotents. We write
$\J(\V)$ for the set of all primitive idempotents and $\J^{(k)}(\V)$ ($1\leq k\leq n$) for the set of all idempotents of rank $k$.

It is  known that $(x,y)\mapsto \tr(x\circ y)$ defines another inner product on $\V$ that is compatible with the Jordan product. 
{\it Throughout this paper, we assume that the inner product on $\V$ 
is this trace inner product, that is,
$\langle x,y\rangle=\tr(x\circ y).$}
In this inner product, the norm of any primitive element is one and so any Jordan frame
in $\V$ is an orthonormal set. Additionally, $\tr(x)=\langle x,e\rangle \,\,\mbox{for all}\,\,x\in \V.$
\\

Given an idempotent $c$, we have the  {\it Peirce (orthogonal) decomposition} \cite{faraut-koranyi}:  $\V=\V(c,1)+\V(c,\frac{1}{2})+\V(c,0)$, where $\V(c,\gamma)=\{x\in \V: x\circ c=\gamma\,x\}$ with $\gamma\in \{1,\frac{1}{2},0\}$. Then, any $b\in \V$ can be decomposed as $b=u+v+w$, where  $u\in \V(c,1)$, $v\in \V(c,\frac{1}{2})$, and $w\in \V(c,0)$. 

\gap
 
Below, we record  some standard results that are needed in the sequel. We emphasize that $\V$ has rank $n$ and carries the trace inner product.

\begin{proposition}\label{prop2} (\cite{tao et al}, Theorem 6.1 or \cite{gowda-doubly stochastic}, page 54)
{\it If $c$ is an idempotent and  $x=u+v+w$, where $u\in \V(c, 1)$, $v\in \V(c, \frac{1}{2})$ and $w\in \V(c, 0)$, then
$\lambda (u+w) \prec \lambda (x).$
}\end{proposition}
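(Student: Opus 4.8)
The plan is to realize $u+w$ as the average of $x$ and a structure-preserving transform of $x$, mirroring the matrix ``pinching'' identity $\diag(X_{11},X_{22}) = \tfrac12\big(X + SXS\big)$ in which $S=\diag(I,-I)$ zeroes out the off-diagonal blocks. The Jordan-algebraic counterpart of $S$ is the element $s:=2c-e$. Since $c^2=c$, we have $s^2 = 4c^2 - 4c + e = e$, so $s$ is an involution; the role played by $X\mapsto SXS$ in the matrix case will be played here by the quadratic representation $P_s$.

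First I would compute how $P_s$ acts on the Peirce spaces $\V(c,\gamma)$, $\gamma\in\{1,\tfrac12,0\}$. From the definition $P_a = 2L_a^2 - L_{a^2}$, and using $s^2=e$ so that $L_{s^2}=L_e=I$, we get $P_s = 2L_s^2 - I$. On $\V(c,\gamma)$ we have $L_c=\gamma I$, hence $L_s = 2L_c - I = (2\gamma-1)I$ acts there as a scalar and in particular maps $\V(c,\gamma)$ into itself; consequently $L_s^2 = (2\gamma-1)^2 I$ and $P_s = \big(2(2\gamma-1)^2 - 1\big)I$ on $\V(c,\gamma)$. (Note that only the restriction of $L_s$ to each Peirce space is used, so the distinction between $L_s^2$ and $L_{s^2}$ causes no trouble.) Evaluating the scalar $2(2\gamma-1)^2-1$ at $\gamma=1,0,\tfrac12$ yields $+1,+1,-1$ respectively, so $P_s(u)=u$, $P_s(w)=w$, $P_s(v)=-v$, whence $P_s(x)=u-v+w$ and
\[
 u + w = \tfrac12\big(x + P_s(x)\big).
\]

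Next I would argue that $P_s$ preserves eigenvalues, i.e.\ $\lambda\big(P_s(y)\big)=\lambda(y)$ for all $y\in\V$. Since $s$ is invertible, $P_s$ is a linear automorphism of the symmetric cone, and $P_s(e)=s^2=e$; an automorphism of the symmetric cone fixing $e$ is a Jordan algebra automorphism, hence carries Jordan frames to Jordan frames and leaves the eigenvalues unchanged. (Consistently, $P_s^2=P_{s^2}=P_e=I$, so $P_s$ is a self-adjoint involution.) The second ingredient is the subadditivity $\lambda(p+q)\prec\lambda(p)+\lambda(q)$, valid for all $p,q\in\V$: this follows from the Ky Fan maximum principle $\sum_{i=1}^{k}\lambda_i(y)=\max\{\langle y,c\rangle: c\in\J^{(k)}(\V)\}$, itself a consequence of the Fan--Theobald--von Neumann inequality recalled in the Introduction, together with additivity of the trace for the endpoint $k=n$. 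Combining these,
\[
 \lambda(u+w)=\lambda\big(\tfrac12(x+P_s(x))\big)=\tfrac12\,\lambda\big(x+P_s(x)\big)\prec\tfrac12\big(\lambda(x)+\lambda(P_s(x))\big)=\tfrac12\big(\lambda(x)+\lambda(x)\big)=\lambda(x),
\]
which is the asserted $\lambda(u+w)\prec\lambda(x)$.

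The main obstacle is the justification that $P_s$ is eigenvalue-preserving: the pinching identity $u+w=\tfrac12(x+P_s(x))$ is a routine Peirce computation and the subadditivity of $\lambda$ is a standard majorization fact, but passing from $s^2=e$ to ``$P_s$ fixes the spectrum'' rests on the structure theory identifying the stabilizer of $e$ in the automorphism group of the symmetric cone with the Jordan automorphism group. Finally, I would stress that because the trace is additive, the subadditivity step delivers equality of the full traces at $k=n$, so the conclusion is genuine majorization $\prec$ and not merely weak majorization $\wprec$.
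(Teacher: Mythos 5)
Your proof is correct. Note, however, that the paper does not prove this proposition at all: it is quoted as a known preliminary (from \cite{tao et al}, Theorem 6.1, and \cite{gowda-doubly stochastic}, p.~54), so there is no internal proof to compare against. Your argument is essentially the one underlying the cited reference: there, one writes $u+w=P_c(x)+P_{e-c}(x)$, uses the identity $P_c+P_{e-c}=\tfrac12\big(I+P_{2c-e}\big)$ together with the fact that $P_{2c-e}$ is an algebra automorphism, and concludes $u+w\prec x$ because a convex combination of the identity and an automorphism is a doubly stochastic map. You reach the same pinching identity by computing the scalars $2(2\gamma-1)^2-1$ on the Peirce spaces (a correct computation), and then replace the doubly-stochasticity step by the Ky Fan maximum principle $S_k(y)=\max_{c\in\J^{(k)}(\V)}\langle y,c\rangle$, i.e., subadditivity of $S_k$ plus trace additivity at $k=n$; both routes are sound, and yours has the advantage of using only Proposition \ref{prop3}$(i)$, which the paper already assumes. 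One refinement you might make: the step you flag as the main obstacle --- that $P_{2c-e}$ preserves eigenvalues --- need not invoke the structure theorem identifying the stabilizer of $e$ in the cone automorphism group with $\Aut(\V)$. Since $P_{2c-e}$ acts as $+1$ on $\V(c,1)+\V(c,0)$ and as $-1$ on $\V(c,\tfrac12)$, the Peirce multiplication rules (products of two ``diagonal'' components are diagonal, diagonal times off-diagonal is off-diagonal, and off-diagonal times off-diagonal is diagonal) show directly that $P_{2c-e}(x\circ y)=P_{2c-e}(x)\circ P_{2c-e}(y)$, so it is a Jordan algebra automorphism, maps Jordan frames to Jordan frames, and hence fixes the spectrum; this keeps the whole proof at the level of elementary Peirce calculus.
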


\begin{proposition}\label{prop3}  
{\it 
Let  $x,y,c,u \in \V$. Then, 
\begin{itemize}
\item [$(i)$] $S_k(x):=\lambda_1(x)+ \lambda_2(x) + \cdots + \lambda_k(x) = \underset{c\in \J^{k}(\V)}{\max} \langle x, c\rangle$.
\item [$(ii)$] $\langle x,y\rangle \leq \langle \lambda(x),\lambda(y)\rangle\leq \langle \lambda(|x|),\lambda(|y|)\rangle.$ 
\item [$(iii)$] $x\leq y\Rightarrow \lambda(x)\leq \lambda(y)$.
\item [$(iv)$] $P_c$ is a positive transformation, that is,  $x\geq 0\Rightarrow P_c(x)\geq 0$.
\item [$(v)$] $\det P_c(u)=(\det c)^2\,\det u.$
\item [$(vi)$] $||x\circ y||_p \leq ||x||_p \, ||y||_{\infty}$, where $p\in [1,\infty]$. In particular,  $||x\circ y||_\infty \leq ||x||_\infty \, ||y||_{\infty}$, or\\ equivalently, 
$\lambda_1(|x\circ y|)\leq \lambda_1(|x|)\,\,\lambda_1(|y|)$.
\end{itemize}
}
\end{proposition}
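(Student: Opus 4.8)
The six assertions are essentially independent, so I would dispatch them in the stated order, using throughout that in the trace inner product every Jordan frame is orthonormal and that a rank-$k$ idempotent $c$ has $\lambda(c)=(1,\dots,1,0,\dots,0)$ with $k$ ones. Part $(i)$ follows from the Fan--Theobald--von~Neumann (FTvN) inequality: for $c\in\J^{(k)}(\V)$, $\langle x,c\rangle\le\langle\lambda(x),\lambda(c)\rangle=\sum_{i=1}^{k}\lambda_i(x)=S_k(x)$, while taking $c=e_1+\cdots+e_k$ in a spectral decomposition $x=\sum x_ie_i$ (eigenvalues decreasing) gives $\langle x,c\rangle=S_k(x)$ by orthonormality, so the maximum is attained. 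In $(ii)$ the first inequality is FTvN verbatim; the second reduces, with $p=\lambda(x)$ and $q=\lambda(y)$, to the elementary $\Rn$ chain $\langle p,q\rangle\le\sum_i|p_i||q_i|\le\langle|p|^{\downarrow},|q|^{\downarrow}\rangle$, the last step being the rearrangement inequality.

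For $(iii)$ I would derive from $(i)$ the Courant--Fischer formula
\[
\lambda_k(x)=\max_{c\in\J^{(k)}(\V)}\ \min\big\{\langle x,p\rangle:\ p\in\J(\V),\ p\le c\big\}.
\]
The ``$\ge$'' direction uses $c=e_1+\cdots+e_k$; the ``$\le$'' direction, for an arbitrary $c$, needs a primitive idempotent lying below both $c$ and the bottom rank-$(n-k+1)$ idempotent $e_k+\cdots+e_n$ of $x$, which exists because the two ranks sum to $n+1>n$. Granting the formula, monotonicity is immediate: if $x\le y$ then $\langle x,p\rangle\le\langle y,p\rangle$ for every primitive (hence nonnegative) $p$ by self-duality of the cone, and the max--min inherits the inequality. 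Parts $(iv)$ and $(v)$ are standard facts of Faraut--Kor\'anyi which I would cite: $P_c$ is a cone automorphism for invertible $c$ and hence cone-preserving for all $c$ by density and continuity; and $\det P_c(u)=(\det c)^2\det u$, which one checks directly when $c,u$ operator commute (then $P_c(u)=c^2\circ u$) and extends via the fundamental formula $P_{P_c u}=P_cP_uP_c$.

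The substance of the proposition is $(vi)$, and here I would start at $p=\infty$. From the polarization $x\circ y=\tfrac14\big[(x+y)^2-(x-y)^2\big]$ and $(x-y)^2\ge0$, part $(iii)$ gives $\lambda_1(x\circ y)\le\tfrac14\lambda_1\big((x+y)^2\big)=\tfrac14\|x+y\|_\infty^2\le\tfrac14(\|x\|_\infty+\|y\|_\infty)^2$; applying this to the pair $(tx,\,t^{-1}y)$ and optimizing over $t>0$ yields $\lambda_1(x\circ y)\le\|x\|_\infty\|y\|_\infty$, and the same bound for $-y$ controls $-\lambda_n(x\circ y)$, so $\|x\circ y\|_\infty\le\|x\|_\infty\|y\|_\infty$. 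Dualizing (the norms $\|\cdot\|_p$ and $\|\cdot\|_q$ are dual via FTvN and H\"older, and $\langle x\circ y,w\rangle=\langle x,y\circ w\rangle$) converts this into the $p=1$ bound $\|x\circ y\|_1\le\|x\|_1\|y\|_\infty$.

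For general $p$ I would route through weak majorization. Writing $N_k(z):=\sum_{i=1}^{k}\lambda_i(|z|)$ for the Ky~Fan $k$-norm, choose (by thresholding the eigenvalues of $x$ at its $k$-th largest magnitude) a splitting $x=a+b$ with $\|a\|_1+k\|b\|_\infty=N_k(x)$; then, since $N_k$ is a norm with $N_k\le\|\cdot\|_1$ and $N_k\le k\|\cdot\|_\infty$, the $p=1$ and $p=\infty$ bounds give
\[
N_k(x\circ y)\le N_k(a\circ y)+N_k(b\circ y)\le\|a\circ y\|_1+k\|b\circ y\|_\infty\le\|y\|_\infty\big(\|a\|_1+k\|b\|_\infty\big)=\|y\|_\infty N_k(x).
\]
Thus $\lambda(|x\circ y|)\underset{w}{\prec}\|y\|_\infty\,\lambda(|x|)$, and Ky~Fan dominance (weak majorization of $\lambda(|\cdot|)$ forces domination in every spectral $p$-norm) delivers $\|x\circ y\|_p\le\|x\|_p\|y\|_\infty$ for all $p\in[1,\infty]$. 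I expect the two ``global'' steps to be the real obstacles: the existence of a common primitive idempotent under two high-rank idempotents in $(iii)$, and, in $(vi)$, the passage from the endpoints $p\in\{1,\infty\}$ to all $p$, which genuinely requires majorization plus Ky~Fan dominance (equivalently Riesz--Thorin interpolation) because the naive per-eigenvalue bound $\lambda_k(|x\circ y|)\le\|y\|_\infty\lambda_k(|x|)$ is false.
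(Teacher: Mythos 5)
First, a point of reference: the paper itself does not prove this proposition --- every item is dispatched by citation (Baes for $(i)$--$(ii)$, Hirzebruch/Gowda--Tao for $(iii)$, Faraut--Kor\'anyi for $(iv)$--$(v)$, and Theorem 3.2 of \cite{gowda-holder} for $(vi)$) --- so your proposal is measured against the standard proofs behind those citations. Your arguments for $(i)$ and $(ii)$ are correct and are exactly those standard arguments (Fan--Theobald--von Neumann plus attainment on a subframe, and rearrangement); for $(iv)$ and $(v)$ you cite Faraut--Kor\'anyi just as the paper does, and your extension sketch for $(v)$ is plausible. Your treatment of $(vi)$ is the most valuable part: instead of quoting \cite{gowda-holder}, you prove the endpoint $\norm{x\circ y}_\infty\le\norm{x}_\infty\norm{y}_\infty$ by polarization plus scaling, pass to $\norm{x\circ y}_1\le\norm{x}_1\norm{y}_\infty$ by duality of the spectral $1$- and $\infty$-norms and the associativity $\ip{x\circ y}{w}=\ip{x}{y\circ w}$, and interpolate via the Ky Fan splitting; this in fact proves the weak majorization $\lambda(\abs{x\circ y})\wprec\norm{y}_\infty\,\lambda(\abs{x})$, which is stronger than $(vi)$ and yields it through Proposition \ref{prop1}$(c)$, exactly as the paper argues in Theorem \ref{holder type theorem for A}. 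One step there is asserted without proof: subadditivity of $N_k(z)=\sum_{i=1}^{k}\lambda_i(\abs{z})$. This is not automatic (there is no inequality $\abs{z+w}\le\abs{z}+\abs{w}$ in a Jordan algebra), but it follows from the variational formula $N_k(z)=\max\,\ip{z}{d_1-d_2}$ over pairs of orthogonal idempotents with ranks summing to $k$ (Fan--Theobald--von Neumann gives the bound, a spectral decomposition of $z$ gives attainment), so this is a fixable omission. Your closing caveat is also correct: for $x$ primitive and $0\ne y\in\V(x,\tfrac12)$ one has $x\circ y=\tfrac12 y$ and $\tr(y)=0$, so $\lambda_2(\abs{x\circ y})>0=\norm{y}_\infty\lambda_2(\abs{x})$; the per-eigenvalue bound fails and weak majorization is genuinely needed.

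The genuine gap is in $(iii)$. Your max--min formula is precisely Hirzebruch's min--max theorem, and granting it, monotonicity follows by self-duality exactly as you say; the ``$\ge$'' direction is also fine. But the crux of ``$\le$'' --- that every $c\in\J^{(k)}(\V)$ admits a primitive idempotent below both $c$ and the rank-$(n-k+1)$ idempotent $e_k+\cdots+e_n$ --- is not established by observing that the two ranks sum to $n+1>n$. That count is a statement about ranges of orthogonal projections, valid in $\Sn$ and $\Hn$; in a general Euclidean Jordan algebra an idempotent has no range, rank is not a vector-space dimension, and the Peirce subspaces are too small for any linear intersection argument (in the Albert algebra, two rank-$2$ idempotents $c,d$ have $\dim\V(c,1)=\dim\V(d,1)=10$ inside $\dim\V=27$, yet a common primitive idempotent must exist). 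The true statement --- that the face-lattice meet satisfies $\mathrm{rank}(c\wedge d)\ge\mathrm{rank}(c)+\mathrm{rank}(d)-n$ --- does hold, but its known proofs go through the structure theory of simple algebras (the face lattices are the projective geometries over the reals, complexes, quaternions, and the octonion projective plane, where any two lines meet), followed by a reduction to simple factors. That is exactly the nontrivial content of the theorem of Hirzebruch which the paper cites for $(iii)$; as written, your proof of $(iii)$ rests on an unproved lemma that is as deep as the result itself. Either cite \cite{hirzebruch} or \cite{gowda-tao-cauchy} for this step, as the paper does, or supply the lattice argument.
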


In the above proposition, Item $(i)$ is stated in \cite{baes}, Lemma 20. The first inequality in $(ii)$, known as  the Fan-Theobald-von Neumann inequality, can be found in 
\cite{baes}, Theorem 23; the second inequality -- a particular case of the first inequality -- is a simple consequence of the  rearrangement inequality of Hardy-Littlewood-P\'{o}lya in $\Rn$: 
$\langle p,q\rangle \leq \langle p^\downarrow,q^\downarrow \rangle$. 
Item $(iii)$ is a consequence of the the well-known min-max theorem of Hirzebruch \cite{hirzebruch, gowda-tao-cauchy}.
 Items $(iv)$ and $(v)$ are well-known properties of $P_c$, see, e.g., \cite{faraut-koranyi}. Items in $(vi)$ follow from Theorem 3.2 in \cite{gowda-holder}.

\gap

Given a Jordan frame $\{e_1,e_2,\ldots, e_n\}$ in $\V$, we consider the corresponding Peirce decomposition of $\V$ and $x\in \V$ (\cite{faraut-koranyi}, Theorem IV.2.1):
$\V=\sum_{i\leq j}\V_{ij}$ and  $x=\sum_{i\leq j}x_{ij}$, where $x_{ij}\in \V_{ij}$ for all $1\leq i\leq j\leq n$. Here, $\V_{ii}:=\R\,e_i$ for all $i$ and 
and $\V_{ij}:=\V(e_i,\frac{1}{2})\cap \V(e_j,\frac{1}{2})$ for $i<j$. Then, for any matrix $A=[a_{ij}]\in \Sn$, we define
\begin{equation} \label{bullet product}
A\bullet x:=\sum_{i\leq j}a_{ij}\,x_{ij}.
\end{equation}
We call this the {\it Schur product of $A$ and $x$} relative to the Jordan frame $\{e_1,e_2,\ldots, e_n\}$.
(This is a generalization of the usual Schur/Hadamard product of matrices in $\Sn$.) \\
Two primary examples: For any $a\in \V$ with spectral decomposition $a=\sum_{i=1}^{n}a_ie_i$,  
\begin{center}
$L_a(x)=A\bullet x$, where $A=[\frac{a_i+a_j}{2}]$ and  $P_a(x)=A\bullet x$, where $A=[a_ia_j]$.
\end{center}
We refer to  \cite{gowda-tao-sznajder} for further examples and properties.
We record a recent result that connects Schur products  and  quadratic representations.

\gap

\begin{proposition}\label{prop4} (\cite{gowda-majorization}, Corollary 3.4)
{\it 
Consider a Jordan frame $\{e_1,e_2,\ldots, e_n\}$. Suppose $A=[a_{ij}]\in \Sn$ is positive semidefinite and  let $a=a_{11}e_1+\cdots+a_{nn}e_n$.
Then, $A\bullet x\prec P_{\sqrt{a}}(x)$ for all $x\in \V$.
}
\end{proposition}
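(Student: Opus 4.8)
The plan is to factor the Schur multiplier $A$ as a Hadamard product of a rank-one matrix and a \emph{correlation matrix}, and then to reduce the claim to the fact that Schur multiplication by a correlation matrix is doubly stochastic. Fix the given Jordan frame, write $x=\sum_{i\le j}x_{ij}$ for the associated Peirce decomposition, and recall that this is an \emph{orthogonal} decomposition and that for $i<j$ the space $\V_{ij}$ is orthogonal to every $e_l$; in particular each off-diagonal Peirce component has trace zero. As a first reduction I would dispose of vanishing diagonal entries by perturbation: for $\epsilon>0$ the matrix $A+\epsilon I$ is positive semidefinite with strictly positive diagonal, $(A+\epsilon I)\bullet x=A\bullet x+\epsilon\sum_i x_{ii}\to A\bullet x$, and the corresponding element $a+\epsilon e\to a$ yields $P_{\sqrt{a+\epsilon e}}(x)\to P_{\sqrt a}(x)$ by continuity of $\sqrt{\,\cdot\,}$ on the symmetric cone and of $u\mapsto P_u$. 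Since the conditions defining $\prec$ are closed, it suffices to treat the case $a_{ii}>0$ for all $i$.

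Assuming $a_{ii}>0$, set $b_{ij}:=\sqrt{a_{ii}}\,\sqrt{a_{jj}}$ and $c_{ij}:=a_{ij}/(\sqrt{a_{ii}}\,\sqrt{a_{jj}})$, so that $a_{ij}=b_{ij}c_{ij}$. The matrix $B=[b_{ij}]$ equals $[a_i'a_j']$ with $a_i'=\sqrt{a_{ii}}$, so the identity $[\alpha_i\alpha_j]\bullet x=P_{\sum_i\alpha_i e_i}(x)$ recorded before the statement gives $B\bullet x=P_{\sqrt a}(x)$; the matrix $C=[c_{ij}]=D^{-1}AD^{-1}$, where $D=\diag(a_1',\dots,a_n')$, is positive semidefinite (being congruent to $A$) with unit diagonal, i.e. a correlation matrix. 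Because rescaling a Peirce component keeps it inside $\V_{ij}$, the two multipliers compose: $A\bullet x=\sum_{i\le j}c_{ij}b_{ij}x_{ij}=C\bullet(B\bullet x)=C\bullet P_{\sqrt a}(x)$. The task thus reduces to proving $C\bullet y\prec y$ for every correlation matrix $C$ and every $y\in\V$.

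Writing $T(y):=C\bullet y$, orthogonality of the Peirce spaces makes $T$ self-adjoint, the unit diagonal with $e=\sum_i e_i$ gives $T(e)=e$, and the vanishing trace of off-diagonal components gives $\tr(Ty)=\sum_i c_{ii}\tr(y_{ii})=\tr(y)$; for positivity I would use the spectral decomposition $C=\sum_k\mu_k u^{(k)}(u^{(k)})^\T$ ($\mu_k\ge0$) to write $T=\sum_k\mu_k P_{\hat u^{(k)}}$ with $\hat u^{(k)}=\sum_i u^{(k)}_i e_i$, each $P_{\hat u^{(k)}}$ mapping the symmetric cone into itself. Thus $T$ is doubly stochastic. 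To turn this into majorization, for each $k$ I would combine Proposition~\ref{prop3}(i) with self-adjointness to get $S_k(Ty)=\max_{c\in\J^{k}(\V)}\ip{y}{Tc}$; for an idempotent $c$ of rank $k$, positivity and unitality give $0\le Tc\le e$ (hence $\lambda(Tc)\in[0,1]^n$) while $\tr(Tc)=k$ forces $\sum_i\lambda_i(Tc)=k$, so the Fan--Theobald--von Neumann inequality (Proposition~\ref{prop3}(ii)) gives
\[
\ip{y}{Tc}\le\ip{\lambda(y)}{\lambda(Tc)}=\sum_i\lambda_i(y)\,\lambda_i(Tc)\le\sum_{i=1}^{k}\lambda_i(y)=S_k(y),
\]
the last inequality because $\lambda(y)$ is nonincreasing and the weights $\lambda_i(Tc)\in[0,1]$ sum to $k$. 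Hence $S_k(Ty)\le S_k(y)$ for all $k$, with equality at $k=n$ by trace preservation, i.e. $Ty\prec y$; taking $y=P_{\sqrt a}(x)$ gives $A\bullet x\prec P_{\sqrt a}(x)$.

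The step I expect to be the main obstacle is the positivity of $T$: this is exactly the Euclidean Jordan algebra analog of the Schur product theorem, and it is the only place where the positive semidefiniteness of $A$ is genuinely used. Routing it through quadratic representations avoids a direct cone argument but relies on the general fact that $P_u\ge0$ for \emph{all} $u$ (not merely for idempotents, as in Proposition~\ref{prop3}(iv)); making that invocation precise, and checking that the perturbation limit is legitimate, are the two points I would treat most carefully.
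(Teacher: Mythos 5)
Your proof is correct, but there is nothing in this paper to compare it against: the paper does not prove Proposition \ref{prop4}; it imports it verbatim from \cite{gowda-majorization} (Corollary 3.4). What you have written is, in substance, a self-contained reconstruction of the argument behind that cited result: factor $a_{ij}=b_{ij}c_{ij}$ with $B=[\sqrt{a_{ii}a_{jj}}]$ of rank one and $C=[c_{ij}]$ a correlation matrix, identify $B\bullet x$ with $P_{\sqrt a}(x)$ through the rank-one Schur-multiplier identity, and show that $y\mapsto C\bullet y$ is doubly stochastic, hence majorization-decreasing. Each step checks out: the composition $A\bullet x=C\bullet(B\bullet x)$ is legitimate because $b_{ij}x_{ij}\in\V_{ij}$, so the Peirce components of $B\bullet x$ are exactly $b_{ij}x_{ij}$; and your direct derivation of $T(y)\prec y$ from double stochasticity---using Proposition \ref{prop3}(i), self-adjointness, $0\le T(c)\le e$, $\tr(T(c))=k$, and the elementary fact that weights in $[0,1]$ summing to $k$ against a decreasing vector are dominated by the top-$k$ sum---is a clean, self-contained alternative to invoking the doubly-stochastic-implies-majorization theorem of \cite{gowda-doubly stochastic}. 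Two minor remarks. First, the point you flag as the main obstacle is not one: in Proposition \ref{prop3} the element $c$ is declared to be an arbitrary element of $\V$, so item (iv) already asserts positivity of $P_c$ for every $c$ (and the paper's own proof of Lemma \ref{lemma 2} uses precisely this: ``for any $c$, $P_c$ is a positive (linear) transformation''); thus each $P_{\hat u^{(k)}}$ in your decomposition $T=\sum_k\mu_k P_{\hat u^{(k)}}$ is positive as stated, with no extra work. Second, the $\epsilon$-perturbation can be bypassed entirely: if $a_{ii}=0$, positive semidefiniteness of $A$ forces $a_{ij}=0$ for all $j$, so one may define $c_{ij}=a_{ij}/\sqrt{a_{ii}a_{jj}}$ only on indices with positive diagonal and set $C$ equal to the identity on the remaining block; this $C$ is still positive semidefinite with unit diagonal and satisfies $a_{ij}=b_{ij}c_{ij}$, so no limiting argument is needed---though the one you give, resting on continuity of $\lambda$ and the closedness of the majorization relation, is perfectly valid.
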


\section{A log-majorization inequality}

In this section, we prove the inequality (\ref{nonnegative majorization inequality}). 
Its proof (along with that of Lemma \ref{lemma 3} below) is  
modeled after techniques given in \cite{hiai} (see also, \cite{hiai-petz}, Theorem 6.13 and \cite{li-mathias}).
First, we present several lemmas.

\gap
 
\begin{lemma}\label{lemma 1} (\cite{lim}, Corollary 9)
{\it For $a,b\geq 0$ in $\V$, $\lambda\big (P_{\sqrt{a}}(b)\big )=\lambda\big (P_{\sqrt{b}}(a)\big ).$ 
}
\end{lemma}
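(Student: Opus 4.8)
The statement is the Jordan analogue of the elementary matrix similarities $\sqrt{A}B\sqrt{A}\sim AB\sim BA\sim\sqrt{B}A\sqrt{B}$, which in the associative world force $\sqrt{A}B\sqrt{A}$ and $\sqrt{B}A\sqrt{B}$ to have the same spectrum. Since we have no associative product at our disposal, the plan is to transport the entire argument to the level of \emph{operators}, where the fundamental formula $P_{P_u(v)}=P_uP_vP_u$ plays the role of associativity. As a preliminary reduction I would assume $a,b$ invertible (all eigenvalues positive): invertible elements are dense, both sides are continuous in $(a,b)$ because $\lambda$ is continuous, and the general case then follows by a limiting argument.

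The core step is as follows. Put $N:=P_{\sqrt a}P_{\sqrt b}$ and $x:=P_{\sqrt a}(b)$, $y:=P_{\sqrt b}(a)$, noting $x,y\ge 0$ since $P_{\sqrt a},P_{\sqrt b}$ are positive transformations. Using the standard identities $P_{u^2}=P_u^{\,2}$ and $P_{P_u(v)}=P_uP_vP_u$, together with the self-adjointness of each $P_c$ in the trace inner product (so that $N^{*}=P_{\sqrt b}P_{\sqrt a}$), I would compute
\begin{align*}
P_{x}&=P_{\sqrt a}P_bP_{\sqrt a}=P_{\sqrt a}P_{\sqrt b}^{\,2}P_{\sqrt a}=NN^{*},\\
P_{y}&=P_{\sqrt b}P_aP_{\sqrt b}=P_{\sqrt b}P_{\sqrt a}^{\,2}P_{\sqrt b}=N^{*}N.
\end{align*}
Because $NN^{*}$ and $N^{*}N$ always share the same characteristic polynomial, the operators $P_x$ and $P_y$ have the same spectrum, with multiplicities. (As a consistency check, taking determinants recovers only $\det x=\det y$, which is already immediate from Proposition~\ref{prop3}$(v)$; the operator identity is what upgrades this to the full spectrum.)

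It then remains to recover the eigenvalue vector of $x$ from the spectrum of $P_x$. Fixing a Jordan frame that diagonalizes $x$, the operator $P_x$ acts on each Peirce space $\V_{ij}$ as multiplication by $x_ix_j$, so its eigenvalue multiset is $\{x_ix_j:i\le j\}$ with the multiplicities $\dim\V_{ij}$ attached, and likewise for $y$. The main obstacle is therefore purely combinatorial: for \emph{nonnegative} tuples, this structured multiset of pairwise products must determine the tuple. I expect to handle this by passing to logarithms $u_i=\log x_i$ and encoding the eigenvalues in the generating function $f(z)=\sum_i z^{u_i}$, so that the spectrum of $P_x$ is captured by an expression of the form $\bigl(1-\tfrac d2\bigr)f(z^{2})+\tfrac d2 f(z)^{2}$ (where $d=\dim\V_{ij}$). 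Equating the expressions for $x$ and $y$ and inspecting the lowest-order term of the difference $f_x-f_y$ forces it to vanish, since the coefficients are \emph{positive integers}; this is precisely where the sign hypothesis $x,y\ge 0$ is indispensable, and it is the step that requires the most care.

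I would close with the remark that, since Lemma~\ref{lemma 1} is exactly Corollary~9 of \cite{lim}, one may alternatively cite that result directly; the outline above is the self-contained route I would take if a proof were desired in place of the citation.
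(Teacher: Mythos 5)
The paper never proves this lemma at all: it is imported verbatim as Corollary~9 of \cite{lim}, so your self-contained argument is by construction a different route, and its backbone is sound. With $N:=P_{\sqrt a}P_{\sqrt b}$, the fundamental formula $P_{P_u(v)}=P_uP_vP_u$, the identity $P_{u^2}=P_u^{\,2}$, and the self-adjointness of quadratic representations in the trace inner product do give $P_x=NN^{*}$ and $P_y=N^{*}N$, which have equal characteristic polynomials; and your description of the spectrum of $P_x$ (multiplication by $x_ix_j$ on $\V_{ij}$) is exactly the paper's own remark that $P_a$ acts as the Schur product by $A=[a_ia_j]$. The density/continuity reduction to $a,b>0$ is also fine and guarantees $x,y>0$, so the logarithms you need exist.

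There are two soft spots, one of which is a genuine error as written. First, the formula $\bigl(1-\tfrac d2\bigr)f(z^{2})+\tfrac d2 f(z)^{2}$ presupposes that all off-diagonal Peirce spaces $\V_{ij}$ share one dimension $d$; this holds precisely in \emph{simple} algebras. In a non-simple algebra such as $\mathcal{S}^2\times\R$ the numbers $\dim\V_{ij}$ vary with $(i,j)$ (they vanish when $e_i,e_j$ lie in different simple ideals) and depend on how the Jordan frame is spread over the ideals, so your final step fails there. The repair is routine but must be said: decompose $\V$ into simple ideals, observe that $a$, $b$, $P_{\sqrt a}(b)$ and eigenvalue vectors all split componentwise, and prove the lemma ideal by ideal. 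Second, the recovery step is only a plan, but it does close along the lines you indicate: writing $f(z)=\sum_i z^{u_i}$, $h(z)=\sum_i z^{v_i}$ with $u_i=\log x_i$, $v_i=\log y_i$ decreasing, and using linear independence of distinct exponentials, the top exponent of $g_x=g_y$ forces $u_1=v_1$; then, if $\delta:=f-h\neq 0$ has top exponent $w$ with coefficient $c\neq 0$, the identity $\bigl(1-\tfrac d2\bigr)\delta(z^{2})+\tfrac d2 (f+h)(z)\delta(z)=0$ is impossible, since for $w<u_1$ the coefficient of $z^{u_1+w}$ equals $\tfrac d2(m_x+m_y)c\neq 0$ (with $m_x,m_y\geq 1$ the top multiplicities, and the rank-one case being trivial), while for $w=u_1$ the coefficient of $z^{2u_1}$ equals $c\bigl(1+\tfrac d2(m_x+m_y-1)\bigr)\neq 0$. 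With these two patches your outline is a complete proof and, unlike the paper, it avoids any appeal to \cite{lim}; what the citation buys the paper is brevity, what your argument buys is self-containedness, at the cost of the Peirce-multiplicity combinatorics above.
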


\begin{lemma}\label{lemma 2}
{\it For $a,b\geq 0$ in $\V$, $\lambda\big (P_{\sqrt{a}}(b)\big )\leq ||a||_\infty\,
\lambda(b).$
}
\end{lemma}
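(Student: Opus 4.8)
The plan is to reduce the desired pointwise eigenvalue bound to a Löwner-order inequality, after first interchanging the roles of $a$ and $b$. Set $\mu := \norm{a}_\infty = \lambda_1(a)$, which is nonnegative since $a \geq 0$. Because every eigenvalue of $a$ is at most $\mu$, the spectral decomposition of $a$ gives $\mu e - a = \sum_{i}(\mu - a_i)e_i \geq 0$, i.e.\ $a \leq \mu e$ in the symmetric cone order.

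Next I would push this inequality through the quadratic representation $P_{\sqrt{b}}$, which is available since $b\geq 0$. By Proposition \ref{prop3}$(iv)$, $P_{\sqrt{b}}$ is a positive transformation, and being linear it is therefore monotone with respect to $\leq$: applying it to $\mu e - a \geq 0$ gives $P_{\sqrt{b}}(\mu e - a)\geq 0$, that is, $P_{\sqrt{b}}(a) \leq \mu\, P_{\sqrt{b}}(e)$. Using the identity $P_c(e) = c^2$ (immediate from the definition of $P_c$ in (\ref{la and pa})) with $c = \sqrt{b}$, we have $P_{\sqrt{b}}(e) = (\sqrt{b})^2 = b$, so that $P_{\sqrt{b}}(a) \leq \mu\, b$.

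I would then invoke the eigenvalue monotonicity of Proposition \ref{prop3}$(iii)$: from $P_{\sqrt{b}}(a) \leq \mu b$ we obtain $\lambda\big(P_{\sqrt{b}}(a)\big) \leq \lambda(\mu b) = \mu\,\lambda(b)$, the last equality holding because $\mu \geq 0$. Finally, Lemma \ref{lemma 1} gives $\lambda\big(P_{\sqrt{a}}(b)\big) = \lambda\big(P_{\sqrt{b}}(a)\big)$, and combining the two yields $\lambda\big(P_{\sqrt{a}}(b)\big) \leq \norm{a}_\infty\,\lambda(b)$, as claimed.

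The only conceptual step is the opening move. Trying to bound $P_{\sqrt{a}}(b)$ directly by $\mu b$ in the Löwner order fails (the analogous matrix statement $\sqrt{A}B\sqrt{A} \leq \mu B$ is already false, as a rank-one $A$ shows), so the essential idea is to use the eigenvalue symmetry of Lemma \ref{lemma 1} to transfer the bound $a \leq \mu e$ into the order-preserving map $P_{\sqrt{b}}$; once the variables are swapped, every remaining step is a routine appeal to the recorded properties of $P_c$ and of $\lambda$.
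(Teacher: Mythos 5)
Your proof is correct and follows essentially the same route as the paper's: both establish $a \leq \norm{a}_\infty e$, push it through the positive linear map $P_{\sqrt{b}}$ to get $P_{\sqrt{b}}(a) \leq \norm{a}_\infty b$, and then combine Lemma \ref{lemma 1} with the eigenvalue monotonicity of Proposition \ref{prop3}$(iii)$. Your closing remark about why the unswapped L{\"o}wner bound fails is a nice addition, but the argument itself matches the paper's proof step for step.
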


\begin{proof} As $a\geq 0$, we have $\lambda_{1}(a)=||a||_\infty$ and so,
$a\leq \lambda_{1}(a)\,e=||a||_\infty\,e.$ Since for any $c$, $P_c$ is a positive (linear) transformation,  $P_{\sqrt{b}}(a)\leq ||a||_\infty\,P_{\sqrt{b}}(e)=||a||_\infty\,b.$ 
From Lemma \ref{lemma 1} and Item $(iii)$ in Proposition \ref{prop3}, 
$$\lambda\big (P_{\sqrt{a}}(b)\big )=\lambda\big (P_{\sqrt{b}}(a)\big )\leq 
\lambda\big (||a||_\infty\,b\big )=||a||_\infty\,\lambda(b).$$
\end{proof}

\begin{lemma}\label{lemma 3} 
{\it Suppose $a\in \V$ is invertible with its spectral decomposition
$$a=\sum_{i=1}^{n}a_ie_i=\sum_{i=1}^{n}|a_i|\varepsilon_i\,e_i,$$
where $|a_1|\geq |a_2|\geq \cdots\geq |a_n|$ and $\varepsilon_i=sgn(a_i)$ for all $i$. For $1\leq k\leq n$, let 
$$x:=\sum_{i=1}^{k}\frac{|a_i|}{|a_k|}\,e_i+\sum_{j=k+1}^{n}e_j\quad\mbox{and}\quad y:=\sum_{i=1}^{k}|a_k|\varepsilon_ie_i+\sum_{j=k+1}^{n}a_je_j.$$ Then, the following statements hold:
\begin{itemize}
\item [$(i)$] $x\geq e$,
\item [$(ii)$] $x$ and $y$ operator commute,
\item [$(iii)$] $P_{\sqrt{x}}(y)=a$, $P_{x}(y^2)=a^2$,
\item [$(iv)$] $(\det x)\,||y||_\infty^k=\prod_{i=1}^{k} |a_i|.$
\end{itemize}
}
\end{lemma}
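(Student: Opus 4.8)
The plan is to exploit the fact that every element appearing in the statement --- namely $x$, $y$, $\sqrt{x}$, $y^2$, $a$, and $a^2$ --- admits a spectral decomposition with respect to the \emph{single} common Jordan frame $\{e_1,e_2,\ldots,e_n\}$. Since $a$ is invertible we have $|a_k|>0$, so the ratios $\frac{|a_i|}{|a_k|}$ are well defined and the eigenvalues of $x$ are positive, making $\sqrt{x}$ meaningful. This common-frame structure reduces all four claims to scalar bookkeeping on eigenvalues.

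For $(i)$ I would simply read off the eigenvalues of $x$: they are $\frac{|a_i|}{|a_k|}$ for $i\leq k$ and $1$ for $j>k$. Because the eigenvalues of $a$ satisfy $|a_1|\geq\cdots\geq|a_n|$, each ratio $\frac{|a_i|}{|a_k|}\geq 1$ for $i\leq k$, so every eigenvalue of $x$ is at least $1$; hence $x-e\geq 0$, i.e. $x\geq e$. Claim $(ii)$ is immediate from the definition of operator commuting recalled in the preliminaries: $x$ and $y$ are spectrally represented with respect to the same Jordan frame, so $L_x$ and $L_y$ commute.

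The heart of the argument is $(iii)$, and the key auxiliary fact I would establish first is that for any two elements $u=\sum_i u_i e_i$ and $v=\sum_i v_i e_i$ written in a common Jordan frame, one has $P_u(v)=\sum_i u_i^2\,v_i\,e_i$. This follows directly from the definition $P_u(v)=2\,u\circ(u\circ v)-u^2\circ v$ together with the Jordan-frame relations $e_i\circ e_j=0$ ($i\neq j$) and $e_i\circ e_i=e_i$, which make each Jordan product act diagonally on the frame. Applying this with $u=\sqrt{x}$ (whose squared eigenvalues are exactly the eigenvalues of $x$) and $v=y$, the $i$-th eigenvalue of $P_{\sqrt{x}}(y)$ is $\frac{|a_i|}{|a_k|}\cdot|a_k|\varepsilon_i=|a_i|\varepsilon_i=a_i$ for $i\leq k$, and $1\cdot a_j=a_j$ for $j>k$, giving $P_{\sqrt{x}}(y)=a$. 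Likewise, using $(\varepsilon_i)^2=1$ so that $y^2$ has eigenvalues $|a_k|^2$ ($i\leq k$) and $a_j^2$ ($j>k$), the $i$-th eigenvalue of $P_x(y^2)$ is $\frac{|a_i|^2}{|a_k|^2}\cdot|a_k|^2=a_i^2$ for $i\leq k$ and $a_j^2$ for $j>k$, giving $P_x(y^2)=a^2$.

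Finally, for $(iv)$ I would compute the two factors separately. The determinant is the product of the eigenvalues, $\det x=\prod_{i=1}^{k}\frac{|a_i|}{|a_k|}=|a_k|^{-k}\prod_{i=1}^{k}|a_i|$, while the spectral $\infty$-norm of $y$ is the largest modulus of its eigenvalues; since the first $k$ eigenvalues of $y$ have modulus $|a_k|$ and the remaining ones have modulus $|a_j|\leq|a_k|$, we get $||y||_\infty=|a_k|$. Multiplying, the factors of $|a_k|^{k}$ cancel and we obtain $(\det x)\,||y||_\infty^{k}=\prod_{i=1}^{k}|a_i|$. There is no serious obstacle here: the only point requiring a touch of care is verifying the diagonal action formula $P_u(v)=\sum_i u_i^2 v_i e_i$ for frame-aligned elements, after which all four parts are routine eigenvalue computations.
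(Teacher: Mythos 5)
Your proof is correct and is precisely the ``direct verification'' that the paper invokes without spelling out: since $x$, $y$, $\sqrt{x}$, $y^2$, $a$, and $a^2$ all share the Jordan frame $\{e_1,\ldots,e_n\}$, the identity $P_u(v)=\sum_i u_i^2 v_i e_i$ for frame-aligned elements reduces every claim to the eigenvalue bookkeeping you carry out. Nothing in your argument deviates from or adds beyond what the paper's one-line proof intends, so there is nothing further to reconcile.
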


This lemma follows from direct verification.

\begin{theorem}\label{log-majorization theorem}
{\it Let $a,b\geq 0$ in $\V$. Then,
$$\lambda\big (P_{\sqrt{a}}(b)\big )\underset{log}{\prec} \lambda(a)*\lambda(b).$$  
Consequently, $\lambda\big ( P_{\sqrt{a}}(b)\big )\underset{w}{\prec}\lambda(a)*\lambda(b).$
}
\end{theorem}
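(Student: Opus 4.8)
The plan is to follow the Li--Mathias proof of Gelfand--Naimark (the same scheme the authors cite from \cite{hiai}), reducing the log-majorization claim to a sequence of partial-product inequalities that are established by combining Lemmas \ref{lemma 1}--\ref{lemma 3}. Since log-majorization means $\prod_{i=1}^{k}\lambda_i(P_{\sqrt{a}}(b))\leq \prod_{i=1}^{k}\lambda_i(a)\,\lambda_i(b)$ for all $1\leq k\leq n$ with equality at $k=n$, I would first dispose of the determinant identity at $k=n$: by Item $(v)$ of Proposition \ref{prop3}, $\det P_{\sqrt{a}}(b)=(\det\sqrt{a})^2\det b=\det a\,\det b$, which is exactly the product of all entries of $\lambda(a)*\lambda(b)$. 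This reduces the theorem to proving the $k<n$ inequalities.

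For the partial products, I would first reduce to the case where $a$ is invertible: invertible elements are dense in $\V$, $\lambda$ is continuous, and both sides of the asserted inequality are continuous in $a$, so proving it for invertible $a$ and passing to the limit suffices. With $a$ invertible, fix $k$ and take the spectral decomposition $a=\sum_i a_ie_i$ with $|a_1|\geq\cdots\geq|a_n|$. I would then invoke the auxiliary elements $x,y$ of Lemma \ref{lemma 3}. The key algebraic input is Lemma \ref{lemma 3}$(iii)$, $P_{\sqrt{x}}(y)=a$, which I expect to combine with a composition/conjugation identity for quadratic representations, $P_{P_{\sqrt{x}}(y)}=P_{\sqrt{x}}P_yP_{\sqrt{x}}$, to rewrite $P_{\sqrt{a}}(b)=P_{P_{\sqrt{x}}(y)}(b)=P_{\sqrt{x}}\big(P_{\sqrt{y^2}}(b')\big)$ for a suitable $b'$; the point is to factor out the ``large'' part $x\geq e$ (Lemma \ref{lemma 3}$(i)$) so that the operator norm of the remaining piece is controlled by $||y||_\infty$ via Lemma \ref{lemma 2}. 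Running the estimate $\lambda\big(P_{\sqrt{a}}(b)\big)\leq ||y||_\infty^2\,\lambda\big(P_{\sqrt{x}}(b)\big)$ (or its analog), taking the product of the top $k$ eigenvalues, and then using $\det P_{\sqrt{x}}(\,\cdot\,)$ together with the identity $(\det x)\,||y||_\infty^{k}=\prod_{i=1}^{k}|a_i|$ from Lemma \ref{lemma 3}$(iv)$ should collapse the bound to $\prod_{i=1}^{k}|a_i|\,\lambda_i(b)=\prod_{i=1}^{k}\lambda_i(a)\lambda_i(b)$ (recalling $a\geq 0$, so $|a_i|=\lambda_i(a)$).

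The main obstacle I anticipate is the bookkeeping in the factorization step: correctly splitting $P_{\sqrt{a}}$ through the operator-commuting pair $(x,y)$ and matching the partial determinant of the truncated operator to the top-$k$ product $\prod_{i=1}^k|a_i|$. This is where the precise form of $x$ and $y$ in Lemma \ref{lemma 3} is engineered to make $(\det x)\,||y||_\infty^k$ telescope, so the delicate point is arranging the inequality $\prod_{i=1}^k\lambda_i(P_{\sqrt{a}}(b))\leq ||y||_\infty^{2k}\prod_{i=1}^k\lambda_i(P_{\sqrt{x}}(b))$ and then bounding $\prod_{i=1}^k\lambda_i(P_{\sqrt{x}}(b))$ by the product of the $k$ largest $\lambda_i(b)$ using that $P_{\sqrt{x}}$ acts as a determinant-scaling positive map with $\det x\cdot||y||_\infty^{2k}$ matching the target. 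Once the log-majorization is in hand, the ``consequently'' clause is immediate: log-majorization of nonnegative vectors implies weak majorization (standard, e.g.\ via Proposition \ref{prop1}$(c)$ applied to $\phi(t)=e^t$), so $\lambda\big(P_{\sqrt{a}}(b)\big)\underset{w}{\prec}\lambda(a)*\lambda(b)$ follows with no further work.
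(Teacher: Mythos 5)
Your high-level plan coincides with the paper's own proof (continuity reduction, the determinant identity at $k=n$ via Proposition \ref{prop3}$(v)$, the auxiliary pair $x,y$ of Lemma \ref{lemma 3}, a factorization of $P_{\sqrt{a}}$ through $x$ and $y$, Lemma \ref{lemma 2} for the $||y||_\infty$ factor, and the telescoping identity of Lemma \ref{lemma 3}$(iv)$), but the factorization step --- which you yourself flag as the delicate point --- fails as written, in three ways. First, from $P_{\sqrt{x}}(y)=a$ the fundamental formula $P_{P_u(v)}=P_uP_vP_u$ gives $P_{P_{\sqrt{x}}(y)}=P_{\sqrt{x}}P_{y}P_{\sqrt{x}}=P_{a}$, \emph{not} $P_{\sqrt{a}}$; so the identity $P_{\sqrt{a}}(b)=P_{P_{\sqrt{x}}(y)}(b)$ that your rewriting starts from is false. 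Second, your factorization places $P_{\sqrt{x}}$ outermost with the $y$-part in the middle, but Lemma \ref{lemma 2} controls eigenvalues only through the \emph{outermost} operator (it says $\lambda_i\big(P_{\sqrt{y}}(c)\big)\leq ||y||_\infty\,\lambda_i(c)$ for $c\geq 0$); it gives no direct eigenvalue bound on a composite whose middle factor is $P_y$. Third, even granting an estimate of the form $\lambda_i\big(P_{\sqrt{a}}(b)\big)\leq ||y||_\infty^{2}\,\lambda_i\big(P_{\sqrt{x}}(b)\big)$, your resulting bound $\prod_{i=1}^{k}\lambda_i\big(P_{\sqrt{a}}(b)\big)\leq ||y||_\infty^{2k}(\det x)\prod_{i=1}^{k}\lambda_i(b)$ does not collapse to $\prod_{i=1}^{k}\lambda_i(a)\lambda_i(b)$: Lemma \ref{lemma 3}$(iv)$ reads $(\det x)\,||y||_\infty^{k}=\prod_{i=1}^{k}a_i$ with exponent $k$, so your version is off by the factor $||y||_\infty^{k}=a_k^{k}$. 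The paper's resolution avoids the fundamental formula entirely: since $x$ and $y$ operator commute and $\sqrt{x}\circ\sqrt{y}=\sqrt{a}$, one has $P_{\sqrt{a}}=P_{\sqrt{y}}P_{\sqrt{x}}$ with $P_{\sqrt{y}}$ outermost, and Lemma \ref{lemma 2} then yields exactly one factor $||y||_\infty$ per eigenvalue, which is precisely what $(iv)$ is engineered for.

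Beyond the factorization, the middle inequality $\prod_{i=1}^{k}\lambda_i\big(P_{\sqrt{x}}(b)\big)\leq (\det x)\prod_{i=1}^{k}\lambda_i(b)$, which you describe only as ``$P_{\sqrt{x}}$ acts as a determinant-scaling positive map,'' needs an actual mechanism, and this is exactly where $x\geq e$ and Lemma \ref{lemma 1} enter: by Lemma \ref{lemma 1} and Proposition \ref{prop3}$(iii)$, $\lambda_i\big(P_{\sqrt{x}}(b)\big)=\lambda_i\big(P_{\sqrt{b}}(x)\big)\geq \lambda_i\big(P_{\sqrt{b}}(e)\big)=\lambda_i(b)$ for every $i$, so each ratio $\lambda_i\big(P_{\sqrt{x}}(b)\big)/\lambda_i(b)$ is at least $1$; hence the product of the first $k$ ratios is bounded by the product of all $n$ of them, which equals $\det P_{\sqrt{x}}(b)/\det b=\det x$. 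Note that this argument divides by $\lambda_i(b)$, so the continuity reduction must be to $a,b>0$, not merely to invertible $a$ as you propose. Your handling of the case $k=n$ and of the final implication (log-majorization of nonnegative vectors implies weak majorization) is correct.
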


\gap

\begin{proof} 
We  have to show that for all  $k\in \{1,2,\ldots, n\}$,
\begin{equation}\label{inequality for all k}
\prod_{i=1}^{k}\lambda_i\big (P_{\sqrt{a}}(b)\big )\leq \prod_{i=1}^{k}\lambda_i(a)\,\lambda_i(b)
\end{equation}
 with equality when $k=n$. By continuity, it is enough to prove this statement for $a,b>0$. 
So, in the rest of the proof, {\it we assume that $a,b>0$ and fix  $k$.}\\
Corresponding to the spectral decomposition $a=\sum_{i=1}^{n}a_ie_i$ (with $a_1\geq a_2\geq \cdots\geq a_n>0$) and $k$, we define $x$ and $y$ as in Lemma \ref{lemma 3}:
$$x:=\sum_{i=1}^{k}\frac{a_i}{a_k}\,e_i+\sum_{j=k+1}^{n}e_j\quad\mbox{and}\quad y:=\sum_{i=1}^{k}a_ke_i+\sum_{j=k+1}^{n}a_je_j.$$ 
In addition to the properties of $x$ and $y$ listed in Lemma \ref{lemma 3}, we observe that $x,y\geq 0$, $\sqrt{y}\,\circ \sqrt{x}=\sqrt{a}$, and $P_{\sqrt{y}}P_{\sqrt{x}}=P_{\sqrt{a}}.$
Now, $x\geq e$ implies, via Lemma \ref{lemma 1} and Proposition \ref{prop3}, 
$$\lambda_i\big (P_{\sqrt{x}}(b)\big )=\lambda_i\big (P_{\sqrt{b}}(x)\big )\geq \lambda_i\big (P_{\sqrt{b}}(e)\big )=\lambda_i(b) 
\,\,\mbox{for all}\,\,i=1,2,\ldots, n.$$ 
Hence, 
$$\frac{\lambda_i\big (P_{\sqrt{x}}(b)\big )}{\lambda_i(b)}\geq 1\,\,\mbox{for all}\,\,i=1,2,\ldots,n.$$
It follows that 
$$ \prod_{i=1}^{k}\frac{\lambda_i\big (P_{\sqrt{x}}(b)\big )}{\lambda_i(b)}\leq  \prod_{i=1}^{n}\frac{\lambda_i(P_{\sqrt{x}}(b)\big )}{\lambda_i(b)}=\frac{\det P_{\sqrt{x}}(b)}{\det b}=\det x.$$
This implies that 
$$\prod_{i=1}^{k}\lambda_i\big (P_{\sqrt{x}}(b)\big )\leq (\det x)\,\prod_{i=1}^{k}\lambda_i(b).$$
On the other hand, for any index $i\in \{1,2,\ldots, n\}$, from Lemma \ref{lemma 2},
$$\lambda_i\big (P_{\sqrt{a}}(b)\big )=\lambda_i\big (P_{\sqrt{y}}P_{\sqrt{x}}(b)\big )\leq ||y||_\infty\,\lambda_i\big (P_{\sqrt{x}}(b)\big ).$$
Hence,
$$\prod_{i=1}^{k}\lambda_i\big (P_{\sqrt{a}}(b)\big )\leq ||y||_\infty^k\,\prod_{i=1}^{k}\lambda_i\big (P_{\sqrt{x}}(b)\big )\leq ||y||_\infty^k\, (\det x)\,\prod_{i=1}^{k}\lambda_i(b).$$
As $(\det x)||y||_\infty^k=\prod_{i=1}^{k}a_i=\prod_{i=1}^{k}\lambda_i(a)$ (from Lemma \ref{lemma 3}), we see that 
$$\prod_{i=1}^{k}\lambda_i\big (P_{\sqrt{a}}(b)\big )\leq \prod_{i=1}^{k} \lambda_i(a)\,\lambda_i(b).$$
This proves the inequality (\ref{inequality for all k}). Now suppose $k=n$.
Then,
$$\prod_{i=1}^{n}\lambda_i\big (P_{\sqrt{a}}(b)\big )=\det \,P_{\sqrt{a}}(b)=(\det a)\det b= \prod_{i=1}^{n} \lambda_i(a)\,\lambda_i(b).$$
Finally, the weak-majorization inequality is an immediate consequence of the log-majorization inequality. This completes the proof.
\end{proof}

\gap

As a consequence of the above log-majorization inequality, we now prove a weak-majorization inequality dealing with quadratic representations.
While our primary focus here is to prove  (\ref{Pa absolute majorization inequality}), it turns out that a general result dealing with L{\"o}wner maps of sublinear functions 
can obtained without much difficulty. First, some relevant definitions.
\\
For  a function $\phi : \R \to \R$, the corresponding {\it L{\"o}wner map} (also denoted by) $\phi : \V \to \V$ is defined as follows: For
any $x\in \V$ with spectral decomposition
$x = \sum_{i=1}^{n} x_i e_i$, $\phi(x):=\sum_{i=1}^{n} \phi(x_i) e_i.$
We make one simple observation (using \cite{bhatia}, Corollary II.3.4): When $\phi$ is convex,
$$x\prec y\Rightarrow \phi(x)\wprec \phi(y).$$

A function $\phi : \R \to \R$ is said to be {\it sublinear} if
\begin{enumerate}
\item $\phi(\mu t) = \mu\phi(t)$ for all $\mu \geq 0$ and $t \in \R$;
\item  $\phi(t+s) \leq \phi(t) + \phi(s)$ for all $t,s\in \R$.
\end{enumerate}

It is easy to see that sublinear functions on $\R$ are of the form $\phi(t)=\alpha\,t$ for $t\geq 0$ and $\phi(t)=\beta\,t$ for $t\leq 0$, where (constants) $\alpha,\beta\in \R$ satisfy 
$\beta\leq \alpha$. Particular examples are $\phi(t)=|t|, \max\{t,0\}$, and $\max\{-t,0\}$.

\gap

\begin{theorem} \label{Pa weak-majorization}
{\it Let $\phi : \R \to \R$ be a nonnegative sublinear function. Then, for all $a,b\in \V$,
$$\lambda\big ( \phi(P_a(b))\big )\underset{w}{\prec}\lambda(a^2)*\lambda(\phi(b)).$$
In particular, 
 $\lambda\big ( |P_a(b)|\big )\underset{w}{\prec}\lambda(a^2)*\lambda(|b|).$
}
\end{theorem}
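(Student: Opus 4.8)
The plan is to interpose the element $P_a(\phi(b))$ between the two sides and to chain two weak-majorization inequalities. Since $\phi$ is nonnegative we have $\phi(b)\geq 0$, so the first link will be a soft fact about positive self-adjoint transformations and sublinear L\"owner maps, namely
$$\lambda\big(\phi(P_a(b))\big)\underset{w}{\prec}\lambda\big(P_a(\phi(b))\big),$$
while the second will reduce, via an eigenvalue identity, to the log-majorization inequality of Theorem \ref{log-majorization theorem} applied to the nonnegative data $a^2$ and $\phi(b)$:
$$\lambda\big(P_a(\phi(b))\big)\underset{w}{\prec}\lambda(a^2)*\lambda(\phi(b)).$$
Weak majorization in $\Rn$ is transitive, so composing the two lines yields the theorem; the ``in particular'' statement is the case $\phi=|\cdot|$, which is nonnegative and sublinear. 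This first link is the promised generalization of the positive-transformation inequality $|P(b)|\underset{w}{\prec}P(|b|)$.

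For the first link I would fix $k$ and use the variational formula $S_k(x)=\max_{c\in \J^{(k)}(\V)}\langle x,c\rangle$ of Proposition \ref{prop3}(i). Writing the spectral decomposition $P_a(b)=\sum_j\mu_j f_j$, the eigenvalues of $\phi(P_a(b))=\sum_j\phi(\mu_j)f_j$ are the $\phi(\mu_j)$, so $S_k(\phi(P_a(b)))=\sum_{j\in J}\phi(\mu_j)$ where $J$ collects the indices of the $k$ largest $\phi(\mu_j)$; set $c^\star:=\sum_{j\in J}f_j\in\J^{(k)}(\V)$. The key pointwise estimate is that for any $g\geq 0$ and any $x\in\V$,
$$\phi\big(\langle x,g\rangle\big)\leq \langle \phi(x),g\rangle;$$
indeed, writing $x=\sum_i x_i\varepsilon_i$ and putting $t_i:=\langle\varepsilon_i,g\rangle\geq 0$ (self-duality of the symmetric cone), this reads $\phi\big(\sum_i t_i x_i\big)\leq \sum_i t_i\phi(x_i)$, which is exactly subadditivity plus positive homogeneity of $\phi$. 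Applying it with $g=P_a(f_j)\geq 0$ (positivity of $P_a$) and using $\mu_j=\langle P_a(b),f_j\rangle=\langle b,P_a(f_j)\rangle$ (self-adjointness of $P_a$ and orthonormality of the frame) gives $\phi(\mu_j)\leq\langle\phi(b),P_a(f_j)\rangle$. Summing over $j\in J$,
$$S_k\big(\phi(P_a(b))\big)=\sum_{j\in J}\phi(\mu_j)\leq \big\langle\phi(b),P_a(c^\star)\big\rangle=\big\langle P_a(\phi(b)),c^\star\big\rangle\leq S_k\big(P_a(\phi(b))\big),$$
which is the first link.

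For the second link, set $w:=\phi(b)\geq 0$. Here the difficulty is that $P_a$ need not be the quadratic representation of a nonnegative element, whereas Theorem \ref{log-majorization theorem} is phrased for $P_{\sqrt{(\cdot)}}$ of nonnegative data; I would bridge this with the spectral identity $\lambda(P_a(w))=\lambda(P_{|a|}(w))$. For invertible $a=\sum_i a_ie_i$, let $u:=\sum_i\operatorname{sgn}(a_i)e_i$, so $u^2=e$ and $a=u\circ|a|$ with $u,|a|$ operator commuting; the Peirce (eigenvalue) description of quadratic representations then gives $P_a=P_uP_{|a|}$. Since $P_u(e)=u^2=e$ and, by Proposition \ref{prop3}(v), $\det P_u(x)=(\det u)^2\det x=\det x$, we get $\det(te-P_u(x))=\det(P_u(te-x))=\det(te-x)$ for every $t$, so $P_u$ preserves characteristic polynomials, hence eigenvalues; thus $\lambda(P_a(w))=\lambda(P_uP_{|a|}(w))=\lambda(P_{|a|}(w))$. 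The identity extends to all $a$ by continuity of $P_{(\cdot)}$ and $\lambda$, invertible elements being dense. Finally, as $|a|=\sqrt{a^2}$, Theorem \ref{log-majorization theorem} applied to $a^2\geq 0$ and $w\geq 0$ yields $\lambda(P_{|a|}(w))\underset{log}{\prec}\lambda(a^2)*\lambda(w)$, hence $\underset{w}{\prec}$, closing the chain.

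The main obstacle is this second step: reconciling the possibly indefinite $a$ inside $P_a$ with the nonnegativity hypothesis of the log-majorization inequality. The sign-factorization $P_a=P_uP_{|a|}$ combined with the determinant identity of Proposition \ref{prop3}(v) is precisely what makes $P_u$ spectrum-preserving and lets the log-majorization theorem take over. The first step, by contrast, is essentially automatic once the inner-product estimate $\phi(\langle x,g\rangle)\leq\langle\phi(x),g\rangle$ is isolated, since it encodes nothing more than the two defining properties of a sublinear function.
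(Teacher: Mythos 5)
Your proposal is correct, and its overall skeleton coincides with the paper's: interpose $P_a(\phi(b))$, prove $\phi(P_a(b))\wprec P_a(\phi(b))$ from the pointwise estimate $\phi(\ip{x}{g})\leq\ip{\phi(x)}{g}$ for $g\geq 0$, then finish with Theorem \ref{log-majorization theorem} applied to the nonnegative pair $a^2,\ \phi(b)$. Indeed, your first link is essentially a verbatim reproduction of the paper's Lemma \ref{positive transformation lemma} (the paper states it for an arbitrary positive linear transformation $P$, which requires passing to the adjoint $P^{\ast}$; you inline the self-adjoint case $P=P_a$, which is all the theorem needs). The genuine difference is the middle step. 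The paper bridges $P_a$ and $P_{|a|}$ by \emph{citing} the majorization $P_a(x)\prec P_{|a|}(x)$ from \cite{gowda-majorization}, so its chain reads $\lambda(\phi(P_a(b)))\wprec\lambda(P_a(\phi(b)))\prec\lambda(P_{|a|}(\phi(b)))\wprec\lambda(a^2)*\lambda(\phi(b))$. You instead prove the stronger exact identity $\lambda(P_a(w))=\lambda(P_{|a|}(w))$: for invertible $a$ you factor $P_a=P_uP_{|a|}$ with $u^2=e$ (legitimate, since operator commuting elements satisfy $P_xP_y=P_{x\circ y}$, as one checks on the Peirce spaces $\V_{ij}$ where $P_x$ acts as multiplication by $x_ix_j$), note $P_u(e)=e$ and, via Proposition \ref{prop3}(v), $\det P_u(z)=(\det u)^2\det z=\det z$, so that $\det(te-P_u(z))=\det(P_u(te-z))=\det(te-z)$ for all $t$ and $P_u$ preserves characteristic polynomials, hence spectra; density of invertible elements and continuity of $\lambda$, $|\cdot|$ and $a\mapsto P_a$ then extend the identity to all $a$. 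All of these steps check out. What each route buys: the paper's citation keeps its proof short; your argument is self-contained and yields a sharper intermediate fact of independent interest --- spectral invariance under $P_u$ with $u^2=e$, the Jordan-algebraic analogue of conjugation by a signature matrix in the Hermitian case --- at the cost of the factorization and continuity machinery that the citation lets the paper skip.
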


We prove the above result by relying on the following lemma that may be of independent interest. It is motivated by a recent result in \cite{jeong et al} which states that 
$\phi(P_u(x))\underset{w}{\prec} P_u(\phi(x))$ when $\phi$ is a convex function  on $\R$ with $\phi(0)\leq 0$ and $u^2\leq e$.
In the result below, we replace $P_u$ by a positive linear transformation, but restrict $\phi$ to sublinear functions.

\begin{lemma}\label{positive transformation lemma}
{\it 
        Suppose $P : \V \to \V$ is a positive linear transformation and $\phi : \R \to \R$ is sublinear. Then,
        \[ \phi(P(x)) \wprec P(\phi(x)) \quad (x \in \V). \]
}
\end{lemma}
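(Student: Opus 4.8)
The plan is to deduce the weak-majorization inequality from its scalar characterization. By Proposition \ref{prop3}$(i)$ it suffices to show, for each $k\in\{1,\dots,n\}$, that $S_k\big(\phi(P(x))\big)\le S_k\big(P(\phi(x))\big)$, where $S_k(z)=\max_{c\in\J^{(k)}(\V)}\ip{z}{c}$. The mechanism I would exploit is the explicit form of a sublinear function: $\phi(t)=\alpha t$ for $t\ge 0$ and $\phi(t)=\beta t$ for $t\le 0$ with $\beta\le\alpha$, equivalently $\phi(t)=\alpha t^+-\beta t^-$. Applied to the L\"owner map this gives $\phi(x)=\alpha x^+-\beta x^-$, and since $P$ is linear, $P(\phi(x))=\alpha P(x^+)-\beta P(x^-)$. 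This is the one place positivity of $P$ is used: because $x^\pm\ge 0$, both $P(x^+)$ and $P(x^-)$ lie in the (self-dual) symmetric cone.

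Next I would fix $k$ and take a spectral decomposition $P(x)=\sum_i z_i e_i$. Let $I$ index the $k$ largest values among $\phi(z_1),\dots,\phi(z_n)$ and set $c:=\sum_{i\in I}e_i\in\J^{(k)}(\V)$. Since $\phi(P(x))=\sum_i\phi(z_i)e_i$ shares the frame $\{e_i\}$, the maximum defining $S_k$ is attained at this $c$, so $S_k\big(\phi(P(x))\big)=\ip{\phi(P(x))}{c}=\sum_{i\in I}\phi(z_i)$. By sublinearity I may choose slopes $s_i\in[\beta,\alpha]$ with $\phi(z_i)=s_i z_i$ (take $s_i=\alpha$ when $z_i\ge 0$ and $s_i=\beta$ when $z_i<0$), and put $g:=\sum_{i\in I}s_i e_i$. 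Then $\beta c\le g\le\alpha c$ and $S_k\big(\phi(P(x))\big)=\ip{P(x)}{g}=\ip{P(x^+)}{g}-\ip{P(x^-)}{g}$.

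The heart of the argument is a one-sided sandwiching using self-duality of the cone. Because $\alpha c-g=\sum_{i\in I}(\alpha-s_i)e_i\ge 0$ and $P(x^+)\ge 0$, we get $\ip{P(x^+)}{g}\le\alpha\,\ip{P(x^+)}{c}$; because $g-\beta c\ge 0$ and $P(x^-)\ge 0$, we get $\ip{P(x^-)}{g}\ge\beta\,\ip{P(x^-)}{c}$. Combining,
$$S_k\big(\phi(P(x))\big)\le\alpha\,\ip{P(x^+)}{c}-\beta\,\ip{P(x^-)}{c}=\ip{P(\phi(x))}{c}\le S_k\big(P(\phi(x))\big),$$
where the last inequality is again Proposition \ref{prop3}$(i)$. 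As $k$ was arbitrary, this is precisely $\phi(P(x))\wprec P(\phi(x))$.

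The step I expect to require the most care is keeping the two sides aligned. The natural first instinct is to pass to the adjoint $P^*$ and compare $x$ with $\phi(x)$ directly, but the mismatch between $x$ and $\phi(x)$ makes that route stall. The resolution is to choose $c$ to be the \emph{maximizing} idempotent for $\phi(P(x))$ (so that $c$ operator commutes with $P(x)$ and $S_k$ is captured exactly), to linearize $\phi$ on the support of $c$ by the single element $g$ trapped between $\beta c$ and $\alpha c$, and then to split $x=x^+-x^-$ so that $P$ is only ever evaluated on nonnegative elements. The inequalities $\beta c\le g\le\alpha c$ paired with $P(x^\pm)\ge 0$ are exactly what turn $g$ back into the constants $\alpha,\beta$ and reassemble $P(\phi(x))$.
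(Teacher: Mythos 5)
Your proposal is correct, and it takes a genuinely different route from the paper's. The paper's proof rests on a scalar claim --- $\phi(\langle x, a\rangle) \le \langle \phi(x), a\rangle$ whenever $a \ge 0$ --- proved directly from subadditivity and positive homogeneity by expanding $\langle x, a\rangle = \sum_i x_i \langle e_i, a\rangle$ with nonnegative weights; it then passes to the adjoint, writing $S_k(\phi(P(x))) = \sum_{i=1}^{k}\phi(\langle x, P^{\ast}(f_{i'})\rangle)$ for the frame elements $f_{i'}$ of $P(x)$ carrying its top-$k$ values of $\phi$, notes that self-duality of the cone makes $P^{\ast}$ positive so the claim applies with $a = P^{\ast}(f_{i'})$, and finishes with Proposition \ref{prop3}$(i)$. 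So the adjoint route you dismissed as stalling is exactly the paper's route: the mismatch between $x$ and $\phi(x)$ is absorbed by the scalar claim rather than avoided. Your argument instead exploits the one-dimensional classification $\phi(t) = \alpha t^{+} - \beta t^{-}$ (which the paper states but never uses in its proof), never touches $P^{\ast}$, and invokes self-duality only through pairings of cone elements: the element $g$ with $\beta c \le g \le \alpha c$ linearizes $\phi$ on the top-$k$ support of $P(x)$, and positivity of $P$ enters solely via $P(x^{\pm}) \ge 0$. The trade-off: the paper's scalar lemma is more modular and does not depend on sublinear functions on $\R$ having a normal form, whereas your sandwich argument is more concrete, stays inside a single Jordan frame, and isolates exactly where each hypothesis is used; both proofs conclude identically with the variational formula of Proposition \ref{prop3}$(i)$.
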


\begin{proof}
        We first claim, for any $x \in \V$ and $a \geq 0$, the inequality
        \[ \phi(\ip{x}{a}) \leq \ip{\phi(x)}{a}. \]
         To see this, we write the spectral decomposition of $x$ and use the definition of $\phi(x)$:  
        \[ x = \sum_{i=1}^{n} x_i e_i \quad \mbox{and}\quad \phi(x) = \sum_{i=1}^{n} \phi(x_i) e_i. \]
        Since $\ip{e_i}{a} \geq 0$ for all $i$ and $\phi$ is sublinear, it follows that
        \[ \phi(\ip{x}{a}) = \phi \left( \sum_{i=1}^{n} x_i \ip{e_i}{a} \right) \leq \sum_{i=1}^{n} \phi(x_i) \ip{e_i}{a} = \ip{\phi(x)}{a}. \]
        Hence the claim.

        \gap

        Now,  we write the spectral decomposition of $P(x)$  and use the definition of $\phi(P(x))$:
        \[ P(x) = \sum_{i=1}^{n} \alpha_i f_i, \quad \phi(P(x)) = \sum_{i=1}^{n} \phi(\alpha_i) f_i. \]
        Since the eigenvalues of $\phi(P(x))$ are $\phi(\alpha_1),\, \ldots,\, \phi(\alpha_n)$, there exists a permutation $\sigma$ on the set   $\{1,\, 2,\, \ldots,\, n\}$
such that $\lambda_i(\phi(P(x))) = \phi(\alpha_{\sigma(i)}) = \phi(\ip{P(x)}{f_{\sigma(i)}})$ for all $i$. To simplify the notation, we let $i' := \sigma(i)$.
 Then, for any index $k$, $1 \leq k \leq n$, we have
        \[ S_k(\phi(P(x))) = \sum_{i=1}^{k} \lambda_i(\phi(P(x))) = \sum_{i=1}^{k} \phi(\ip{P(x)}{f_{i'}}) = \sum_{i=1}^{k} \phi(\ip{x}{P^{\ast}(f_{i'})}), \]
        where $P^{\ast}$ is the adjoint of $P$. As the symmetric cone of $\V$  is  self-dual, we see that $P^*$ is also a positive linear transformation, hence 
 $P^{\ast}(f_{i'}) \geq 0$. By applying the above claim, we  see that
        \[ \phi(\ip{x}{P^{\ast}(f_{i'})}) \leq \ip{\phi(x)}{P^{\ast}(f_{i'})} = \ip{P(\phi(x))}{f_{i'}}. \]
        Hence,
        \[ S_k(\phi(P(x))) \leq \sum_{i=1}^{k} \ip{P(\phi(x))}{f_{i'}} \leq \max_{c \in \mathcal{J}^{(k)}(\V)} \ip{P(\phi(x))}{c} = S_k(P(\phi(x))), \]
        where the last equality follows from Item $(i)$ in Proposition 2.3. As this inequality holds for all $1 \leq k \leq n$, we have $\phi(P(x)) \wprec P(\phi(x))$.
\end{proof}

\begin{example}
        Let $P$ be as in the above lemma. Taking $\phi(t) = \abs{t}$, $\phi(t) = \max\{t,\, 0\}$, or $\phi(t) = \max\{-t,\, 0\}$, we get the inequalities
        \[ \abs{P(x)} \wprec P(\abs{x}), \quad P(x)^{+} \wprec P(x^{+}), \quad \mbox{and}\,\,P(x)^{-} \wprec P(x^{-}), \]
        for any $x \in \V$.
\end{example}

\gap

\noindent{\bf Proof of Theorem \ref{Pa weak-majorization}}: 
For the given $a,b\in \V$, we have $a^2,|b|\geq 0$ and $\sqrt{a^2}=|a|$. By Theorem \ref{log-majorization theorem}, $\lambda\big (P_{\sqrt{a^2}}(|b|)\big )\underset{log}{\prec} \lambda(a^2)*\lambda(|b|)$. 
Since the inequality $p\underset{log}{\prec} q$ in $\Rnp$ implies that $p\underset{w}{\prec} q$ (see \cite{bhatia}, Example II.3.5), we see that 
$$\lambda\big (P_{|a|}(|b|)\big )\underset{w}{\prec} \lambda(a^2)*\lambda(|b|).$$
Now, it is known that 
$P_a(x)\prec P_{|a|}(x)$ for all $x$, see e.g., \cite{gowda-majorization}, page 11. Using this and the above lemma with $P_a$ in place of $P$, we have
$$\lambda\big (\phi(P_a(b))\big )\underset{w}{\prec} \lambda\big (P_a(\phi(b))\big )
\prec\lambda\big (|P_{|a|}(\phi(b))\big )\underset{w}{\prec} 
\lambda(a^2)*\lambda(\phi(b)),$$
where we have used the condition that $\phi$ is nonnegative (so $\phi(t)\geq 0$ for all $t\in \R$, consequently,  $\phi(b)\geq 0$.)  Finally, by putting $\phi(t)=|t|$, we get  
$ \lambda(|P_a(b)|)\underset{w}{\prec} \lambda(a^2)*\lambda(|b|).$\hfill $\Box$

\gap

\noindent{\bf Remark.} In a recent paper \cite{gowda-holder},  it was shown that
$P_a(x)\prec L_{a^2}(x)\, \mbox{for all}\,\,a, x\in \V$. This, in particular, implies that
\begin{equation}\label{gowda-majorization}
\lambda\big (P_{\sqrt{a}}(b)\big )\prec \lambda(a\circ b)\quad \mbox{for all}\,\,a\geq 0\,\,\mbox{and}\,\,b\in \V.
\end{equation}
In the next section, we will prove the inequality $\lambda(|a\circ b|)\underset{w}{\prec}\lambda(|a|)*\lambda(|b|)$. 
Based on these two results, we can give an alternative proof of the absolute-value case in the above theorem as follows:
$$P_a(b)\prec a^2\circ b\Rightarrow  \lambda(|P_a(b)|)\underset{w}{\prec} \lambda(|a^2\circ b|)\underset{w}{\prec} \lambda(a^2)*\lambda(|b|).$$
 
\gap

We now extend Theorem \ref{Pa weak-majorization} to Schur products. In what follows, for a matrix $A\in \Sn$, ${\mathrm{diag}}(A)$ denotes the diagonal vector of $A$ and 
(by abuse of notation) $\lambda({\mathrm{diag}}(A))$  is the decreasing rearrangement of ${\mathrm{diag}}(A)$. 
 
\begin{theorem}\label{diag majorization}
{\it Let $\phi:\R\rightarrow \R$ be a nonnegative sublinear function. Suppose $A\in \Sn$ is  a positive semidefinite matrix and $b\in \V$. Then, relative to any Jordan frame,
$\phi\big (A\bullet b\big )\underset{w}{\prec} A\bullet \phi(b)$ and 
 $$\lambda \big (\phi(A\bullet b)\big )\underset{w}{\prec} \lambda \big (\mathrm{diag}(A)\big )*\lambda\big (\phi(b)\big ).$$
In particular,
$$\lambda \big (|A\bullet b|\big )\underset{w}{\prec} \lambda \big (\mathrm{diag}(A)\big )*\lambda\big (|b|\big ).$$
}
\end{theorem}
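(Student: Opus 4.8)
The plan is to recognize the Schur-product map $x \mapsto A\bullet x$ as a \emph{positive} linear transformation on $\V$ and then to run everything through the machinery already assembled: Lemma \ref{positive transformation lemma}, Proposition \ref{prop4}, and Theorem \ref{log-majorization theorem}. First I would fix the Jordan frame $\{e_1,\ldots,e_n\}$ defining the Schur product and set $a := a_{11}e_1+\cdots+a_{nn}e_n$, noting that $a\geq 0$ (the diagonal of a positive semidefinite matrix is nonnegative) and that $\lambda(a)$ is exactly the decreasing rearrangement of $\mathrm{diag}(A)$. Writing $P(x):=A\bullet x$, linearity in $x$ is immediate from the definition in \eqref{bullet product}. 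To establish positivity, I would fix $x\geq 0$ and apply Proposition \ref{prop4} to get $A\bullet x \prec P_{\sqrt{a}}(x)$; since $P_{\sqrt a}$ is a positive transformation, the right-hand side is $\geq 0$. The key elementary observation is that majorization by a nonnegative vector forces nonnegativity: because both vectors have equal total and the first $n-1$ partial sums of $A\bullet x$ are dominated, the least eigenvalue obeys $\lambda_n(A\bullet x)\geq \lambda_n(P_{\sqrt a}(x))\geq 0$. Hence $A\bullet x\geq 0$, so $P$ is a positive linear transformation.

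With positivity in hand, the first assertion $\phi(A\bullet b)\wprec A\bullet\phi(b)$ is precisely Lemma \ref{positive transformation lemma} applied to $P$ and the sublinear function $\phi$. For the second assertion I would chain weak majorizations. The first assertion gives $\lambda\big(\phi(A\bullet b)\big)\wprec\lambda\big(A\bullet\phi(b)\big)$. Since $\phi$ is nonnegative, $\phi(b)\geq 0$, so Proposition \ref{prop4} applies to $\phi(b)$ and yields $A\bullet\phi(b)\prec P_{\sqrt a}\big(\phi(b)\big)$, hence $\lambda\big(A\bullet\phi(b)\big)\wprec\lambda\big(P_{\sqrt a}(\phi(b))\big)$. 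Finally, because $a,\phi(b)\geq 0$, the weak-majorization consequence of Theorem \ref{log-majorization theorem} gives $\lambda\big(P_{\sqrt a}(\phi(b))\big)\wprec\lambda(a)*\lambda\big(\phi(b)\big)$. Transitivity of $\wprec$, together with the identity $\lambda(a)=\lambda\big(\mathrm{diag}(A)\big)$, then produces $\lambda\big(\phi(A\bullet b)\big)\wprec\lambda\big(\mathrm{diag}(A)\big)*\lambda\big(\phi(b)\big)$, which is the second statement.

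The last assertion is the special case $\phi(t)=|t|$, which is nonnegative and sublinear, so that $\phi(A\bullet b)=|A\bullet b|$ and $\phi(b)=|b|$. The one step I expect to require the most care is the positivity of $x\mapsto A\bullet x$: it is what makes Lemma \ref{positive transformation lemma} applicable, and it rests on combining Proposition \ref{prop4} with the ``nonnegativity from majorization'' remark above. Beyond that, the proof is bookkeeping of known majorization facts, and the two points to watch are that Proposition \ref{prop4} must be invoked on $\phi(b)$ (legitimate precisely because $\phi\geq 0$ forces $\phi(b)\geq 0$) rather than on $b$, and that the mixed symbols $\prec$ and $\wprec$ compose correctly since $\prec$ implies $\wprec$.
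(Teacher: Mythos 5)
Your proof is correct, and it reaches the theorem by a route that differs from the paper's own argument in two genuine respects. For the positivity of the map $x\mapsto A\bullet x$, the paper simply cites an external result (\cite{gowda-tao-sznajder}, Prop.\ 2.2), whereas you derive it inside the paper's own framework: $A\bullet x\prec P_{\sqrt{a}}(x)$ by Proposition \ref{prop4}, the right-hand side lies in the symmetric cone by Proposition \ref{prop3}$(iv)$ when $x\geq 0$, and majorization by a nonnegative element forces nonnegativity, since equality of traces and domination of the first $n-1$ partial sums give $\lambda_n(A\bullet x)\geq \lambda_n\big(P_{\sqrt{a}}(x)\big)\geq 0$; this elementary observation is correct and makes the proof self-contained. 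For the eigenvalue inequality itself, the paper does not pass through the first assertion at all: it applies Proposition \ref{prop4} to $b$ itself, upgrades $A\bullet b\prec P_{\sqrt{a}}(b)$ to $\phi(A\bullet b)\wprec\phi\big(P_{\sqrt{a}}(b)\big)$ via the L\"{o}wner-map convexity observation of Section 3, and then invokes Theorem \ref{Pa weak-majorization}. You instead chain through the first assertion, apply Proposition \ref{prop4} to $\phi(b)$ (legitimate, as you note, because $\phi\geq 0$ forces $\phi(b)\geq 0$), and finish with Theorem \ref{log-majorization theorem} directly. Your route buys a shorter dependency chain: it bypasses Theorem \ref{Pa weak-majorization} entirely (and with it the fact $P_a(x)\prec P_{|a|}(x)$ used in that theorem's proof) and needs no convexity of $\phi$, only sublinearity and nonnegativity; the price is that your second assertion logically depends on the first, whereas the paper establishes the two independently. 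All the majorization bookkeeping in your write-up ($\prec$ implying $\wprec$, transitivity of $\wprec$, and $\lambda(a)=\lambda(\mathrm{diag}(A))$) is handled correctly.
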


\begin{proof}
We fix a Jordan frame $\{e_1,e_2,\ldots, e_n\}$ relative to which all Schur products are defined. Since $A$ is positive semidefinite,  
the map $P:x\mapsto A\bullet x$ is a positive linear transformation (\cite{gowda-tao-sznajder}, Prop.2.2). The inequality $\phi(A\bullet b)\underset{w}{\prec} A\bullet \phi(b)$ comes from an application of the above lemma. 
Now, letting $A=[a_{ij}]$, we  define $a:=a_{11}e_1+\cdots+a_{nn}e_n$ and note that $a\geq 0$.  
From Proposition \ref{prop4} we have 
$A\bullet b\prec P_{\sqrt{a}}(b) $
for all $b\in \V$. This implies, by the convexity of $\phi$,  $\phi(A\bullet b)\underset{w}{\prec} \phi\big (P_{\sqrt{a}}(b)\big )$.
 Applying the previous theorem, 
$$\lambda (\phi(A\bullet b))\underset{w}{\prec} \lambda(a)*\lambda(\phi(b))= \lambda (\mathrm{diag}(A))*\lambda(\phi(b)).$$

\end{proof}

\section{A weak-majorization inequality}
In this section, we prove the following.

\begin{theorem}\label{weak majorization theorem} 
{\it Let $a, \, b \in \V$. Then
\begin{eqnarray}\label{sb4}
\lambda\big (|a\circ b|\big )\underset{w}{\prec} \lambda(|a|)*\lambda(|b|).
\end{eqnarray}
}
\end{theorem}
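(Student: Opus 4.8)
The plan is to fix $k\in\{1,\ldots,n\}$ and bound the partial sum $S_k(|a\circ b|)=\sum_{i=1}^k\lambda_i(|a\circ b|)$ by $\sum_{i=1}^k\lambda_i(|a|)\lambda_i(|b|)$; since $\lambda(|a|)*\lambda(|b|)$ is the componentwise product of two decreasing nonnegative vectors it is itself decreasing, so its $k$-th partial sum is exactly $\sum_{i=1}^k\lambda_i(|a|)\lambda_i(|b|)$, and establishing the bound for every $k$ is precisely (\ref{sb4}). The central device is to turn the eigenvalue sum of the \emph{absolute value} into an inner product. Writing $z:=a\circ b=\sum_{i=1}^n\mu_i f_i$ in spectral form, letting $T$ index the $k$ largest of the $|\mu_i|$, and setting $w:=\sum_{i\in T}\mathrm{sgn}(\mu_i)\,f_i$, the orthonormality of a Jordan frame in the trace inner product gives by direct computation $\langle z,w\rangle=\sum_{i\in T}|\mu_i|=S_k(|a\circ b|)$, while $w$ has eigenvalues in $\{-1,0,1\}$ with $|w|\in\J^{(k)}(\V)$ and $\|w\|_\infty=1$ (the case $a\circ b=0$ is trivial). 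Using the self-adjointness of $L_a$ (the trace inner product is compatible with $\circ$) and then the Fan--Theobald--von Neumann inequality (Proposition \ref{prop3}$(ii)$),
\[ S_k(|a\circ b|)=\langle b,\,a\circ w\rangle\le\langle\lambda(|b|),\,\lambda(|a\circ w|)\rangle=\sum_{i=1}^n\lambda_i(|b|)\,\lambda_i(|a\circ w|). \]

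The heart of the argument, and the step I expect to be the main obstacle, is the auxiliary weak-majorization
\[ \lambda(|a\circ w|)\;\wprec\;\big(\lambda_1(|a|),\ldots,\lambda_k(|a|),0,\ldots,0\big). \]
I would prove this by applying the \emph{same} reduction one level deeper. For each $m$ write $S_m(|a\circ w|)=\langle a,\,w\circ w'\rangle$, where $w'$ is the corresponding signed sum built from the spectral decomposition of $a\circ w$, so that $|w'|\in\J^{(m)}(\V)$ and $\|w'\|_\infty=1$; Proposition \ref{prop3}$(ii)$ then gives $S_m(|a\circ w|)\le\sum_i\lambda_i(|a|)\,\lambda_i(|w\circ w'|)$. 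At this point the norm inequalities of Proposition \ref{prop3}$(vi)$ close the estimate cheaply: $\lambda_1(|w\circ w'|)\le\lambda_1(|w|)\lambda_1(|w'|)=1$, so every eigenvalue of $|w\circ w'|$ is at most $1$, while $\|w\circ w'\|_1\le\min\{\|w\|_1\|w'\|_\infty,\,\|w'\|_1\|w\|_\infty\}=\min\{k,m\}$ (using $\|w\|_1=\tr|w|=k$ and the symmetry of $\circ$). These two facts say exactly that $\lambda(|w\circ w'|)\wprec(1,\ldots,1,0,\ldots,0)$ with $\min\{m,k\}$ ones, and multiplying through by the decreasing nonnegative vector $\lambda(|a|)$ via Proposition \ref{prop1}$(a)$ yields $S_m(|a\circ w|)\le\sum_{i=1}^{\min\{m,k\}}\lambda_i(|a|)$, which is the claimed auxiliary majorization.

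Finally I would combine the two displays. The auxiliary majorization, together with Proposition \ref{prop1}$(a)$ (multiply by the decreasing nonnegative vector $\lambda(|b|)$ and sum all $n$ components), gives $\sum_i\lambda_i(|b|)\lambda_i(|a\circ w|)\le\sum_{i=1}^k\lambda_i(|a|)\lambda_i(|b|)$, and chaining this with the first display produces $S_k(|a\circ b|)\le\sum_{i=1}^k\lambda_i(|a|)\lambda_i(|b|)$ for every $k$, i.e.\ (\ref{sb4}). The conceptual difficulty is that the Jordan product interacts poorly with both the absolute value and the eigenvalue map, so there is no direct route through the log-majorization theorem of Section 3; the trick that unlocks everything is representing each partial sum $S_k(|\cdot|)$ as an inner product against a signed rank-$k$ sum $w$ of frame idempotents, after which self-adjointness of $L_a$, the Fan--Theobald--von Neumann inequality, and the elementary norm bounds of Proposition \ref{prop3}$(vi)$ carry the proof.
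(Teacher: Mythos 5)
Your proof is correct. It runs on the same engine as the paper's --- the Fan--Theobald--von Neumann inequality (Proposition \ref{prop3}$(ii)$), self-adjointness of $L_x$ in the trace inner product, the norm inequalities of Proposition \ref{prop3}$(vi)$, and Proposition \ref{prop1}$(a)$ --- but the execution is genuinely different and leaner. The paper computes $S_l(|a\circ b|)$ through the variational formula $S_l(|a\circ b|)=\max_{c\in\J^{(l)}(\V)}\ip{|a\circ b|}{c}$ (Proposition \ref{prop3}$(i)$) and removes the absolute value by multiplying with a sign element $\varepsilon$ satisfying $\varepsilon^2=e$; since the maximizing idempotent $c$ and the sign elements live in unrelated Jordan frames, it must then control mixed products such as $c\circ\varepsilon$ and $(\varepsilon'\circ c')\circ(c\circ\varepsilon)$, which is the content of Lemmas \ref{sb1}, \ref{sb3} and \ref{sb5}, the first of which rests on the Peirce-decomposition majorization of Proposition \ref{prop2}. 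You bypass the maximum and the sign elements simultaneously by realizing each partial sum $S_k(|z|)$ as a single inner product $\ip{z}{w}$ against the signed idempotent $w$ built from $z$'s own spectral frame, and then repeating this device one level down for $S_m(|a\circ w|)$. Because your $w$ and $w'$ are diagonal in their defining frames, the bounds $\norm{w}_\infty\le 1$ and $\norm{w}_1\le k$ are immediate, and your substitute for Lemma \ref{sb3}, namely $\lambda(|w\circ w'|)\wprec(1,\ldots,1,0,\ldots,0)$ with $\min\{k,m\}$ ones, follows from Proposition \ref{prop3}$(vi)$ alone (the $p=\infty$ case for the top eigenvalue, the $p=1$ case together with commutativity of $\circ$ for the trace). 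Thus you never invoke Proposition \ref{prop3}$(i)$, Proposition \ref{prop2}, Lemma \ref{sb1} or its corollary; what you give up is precisely those intermediate inequalities, which hold for arbitrary idempotents and sign elements and are of independent interest. One small correction: if some of the $k$ largest $|\mu_i|$ vanish, then $w$ has rank strictly less than $k$, so the claims ``$|w|\in\J^{(k)}(\V)$'' and ``$\norm{w}_1=\tr(|w|)=k$'' should be weakened to $\norm{w}_\infty\le 1$ and $\norm{w}_1\le k$ --- which is all your estimates actually use --- or, alternatively, assign sign $+1$ to the zero eigenvalues in the definition of $w$.
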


As in the case of Theorem \ref{log-majorization theorem}, the motivation comes from matrix theory. Consider  two  matrices $A,B\in \Hn$. 
Then, in the algebra $\Hn$, $A\circ B=\frac{AB+BA}{2}$ and so,
$$\lambda(|A\circ B|)=s(A\circ B)=s \Big (\frac{AB+BA}{2}\Big ),$$
 where we recall that $s(X)$ denotes the vector of singular values of a 
matrix $X$ written in the decreasing order. Invoking the inequality $s(X+Y)\underset{w}{\prec} s(X)+s(Y)$ (\cite{hiai-petz}, Corollary 6.12), we see that 
$$s\Big (\frac{AB+BA}{2}\Big )\underset{w}{\prec}\frac{s(AB)+s(BA)}{2}\underset{w}{\prec} \frac{s(A)*s(B)+s(B)*s(A)}{2}=
\lambda(|A|)*\lambda(|B|),$$
as $s(A)=\lambda(|A|)$, $s(AB)\underset{w}{\prec}s(A)*s(B)$, etc. Thus, $\lambda(|A\circ B|)\underset{w}{\prec} \lambda(|A|)*\lambda(|B|)$. Theorem \ref{weak majorization theorem} is a 
generalization of this to Euclidean Jordan algebras.
Before considering its proof, we present several lemmas. 

In what follows, we let 
$\varepsilon$ (likewise, $\varepsilon^{\prime}$) be an element in $\V$ 
such as $\varepsilon^2=e$. Clearly, such an element is of the form 
$\sum_{i=1}^n \varepsilon_i e_i$, where $\varepsilon_i=1$ for $i=1, 2, \ldots, k$ and $\varepsilon_j=-1$ for $j=k+1, k+2, \ldots, n$ ($1\leq k \leq n$) for some  Jordan frame $\{e_1, e_2,\ldots, e_n \}$.

\begin{lemma}\label{sb1} 
{\it 
Let $\varepsilon\in \V$ with $\varepsilon^2=e$ and $b \in \V$. Then
$\lambda (|b\circ \varepsilon |) \underset{w}{\prec} \lambda(|b|)*\lambda(|\varepsilon|)=\lambda(|b|).$
}
\end{lemma}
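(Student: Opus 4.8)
The plan is to reduce the Jordan product $b\circ\varepsilon$ to a signed combination of Peirce components and then exploit that passing to absolute values is insensitive to the signs of those components. Since $\varepsilon^2=e$, the element $\varepsilon$ has all eigenvalues equal to $\pm 1$, so $|\varepsilon|=e$ and $\lambda(|\varepsilon|)=\one$; this already accounts for the collapse of the right-hand side, and the real task is to prove $\lambda(|b\circ\varepsilon|)\underset{w}{\prec}\lambda(|b|)$.

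First I would set $c:=\sum_{\varepsilon_i=1}e_i$, so that $c$ is an idempotent and $\varepsilon=2c-e$. I decompose $b$ according to the Peirce decomposition of $c$, writing $b=u+v+w$ with $u\in\V(c,1)$, $v\in\V(c,\tfrac12)$, and $w\in\V(c,0)$. A direct computation using $c\circ u=u$, $c\circ v=\tfrac12 v$, and $c\circ w=0$ gives $b\circ\varepsilon=(2c-e)\circ b=2\big(u+\tfrac12 v\big)-(u+v+w)=u-w$.

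The key observation is that $|u-w|=|u+w|$. Indeed, $u\in\V(c,1)$ and $w\in\V(c,0)$ lie in orthogonal Peirce subalgebras (with units $c$ and $e-c$), so they operator commute; choosing a Jordan frame adapted to both, the eigenvalues of $u\pm w$ are those of $u$ together with those of $\pm w$, and passing to absolute values erases the sign. Hence $\lambda(|b\circ\varepsilon|)=\lambda(|u-w|)=\lambda(|u+w|)$. It then remains to bound $\lambda(|u+w|)$. By Proposition \ref{prop2}, $\lambda(u+w)\prec\lambda(b)$. Applying Proposition \ref{prop1}$(b)$ to the pair $(\lambda(u+w),\lambda(b))\in\Rn\times\Rn$ yields $|\lambda(u+w)|\underset{w}{\prec}|\lambda(b)|$; since weak majorization depends only on decreasing rearrangements and the decreasing rearrangement of $|\lambda(x)|$ is exactly $\lambda(|x|)$, this is precisely $\lambda(|u+w|)\underset{w}{\prec}\lambda(|b|)$. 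Chaining the equalities and inequalities completes the argument.

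The main obstacle — the one genuinely nonroutine step — is recognizing that $b\circ\varepsilon$ equals $u-w$ and that the sign flip becomes immaterial after taking $|\cdot|$, which reduces the claim to the already-established majorization $\lambda(u+w)\prec\lambda(b)$. I do not anticipate any real difficulty in the operator-commuting argument, though I would state carefully why $u$ and $w$ admit a common Jordan frame (so that the eigenvalues of $u\pm w$ genuinely split as described).
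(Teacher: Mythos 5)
Your proof is correct and follows essentially the same route as the paper's: the same idempotent $c$ and Peirce decomposition $b=u+v+w$, the same computation $b\circ\varepsilon=u-w$, the same sign-erasure observation $|u-w|=|u+w|$ (the paper phrases it via spectral decompositions in the subalgebras $\V(c,1)$ and $\V(c,0)$, you via operator commutativity and a common Jordan frame, which is the same idea), and the same finish via Proposition \ref{prop2} and Proposition \ref{prop1}$(b)$. No gaps to report.
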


\noindent{\bf Proof.} As the result is obvious when $\varepsilon =e$ or $-e$, we  assume the spectral decomposition of $\varepsilon$ in the form  
$\varepsilon=\sum_{i=1}^n \varepsilon_i e_i$, where $1\leq k<n$, 
$\varepsilon_i=1$ for $i=1, 2, \ldots, k$ and $\varepsilon_j=-1$ for $j=k+1, k+2, \ldots, n$.
We let $c=\sum_1^ke_i$ and consider the Peirce decomposition  
$b=u+v+w$, where  $u\in \V(c,1)$, $v\in \V(c,\frac{1}{2})$ and $w\in \V(c,0)$.
A direct calculation leads to $b\circ \varepsilon=u-w$. Now, writing the spectral decompositions of $u$ and $w$ in the (sub)algebras $\V(c,1)$ and $\V(c,0)$
in the form $u=\sum_{1}^{k}u_if_i$ and $w=\sum_{k+1}^{n}w_if_i$ for some Jordan frame $\{f_1,f_2,\ldots, f_n\}$ in $\V$, we see that
$$|u-w|=\abs{\sum_{1}^{k}u_if_i-\sum_{k+1}^{n}w_if_i}=\sum_{1}^{k}|u_i|f_i+\sum_{k+1}^{n}|w_i|f_i=|u+w|.$$ Thus, $\lambda(|u-w|)=\lambda(|u+w|).$ 
By Proposition \ref{prop2}, $u+w\prec b$ and so
$\lambda(u+w)\prec \lambda(b)$. By Item $(b)$ in Proposition \ref{prop1}, $|\lambda(u+w)|\underset{w}{\prec} |\lambda(b)|.$ Since $\lambda(|b|)$ is  the decreasing rearrangement of the vector
$|\lambda(b)|$, we see that 
$\lambda(|b\circ \varepsilon|) =\lambda(|u-w|)=\lambda(|u+w|)\underset{w}{\prec} \lambda(|b|)$.
This completes the proof.
\hfill $\Box$

\gap

We note a simple consequence. 

\begin{corollary}
{\it 
For elements $\varepsilon$ and $\varepsilon^\prime$ with $\varepsilon^2=e=(\varepsilon^\prime)^2$ and any idempotent $c$ in $\V$, we have
$$\lambda (|c\circ \varepsilon|) \underset{w}{\prec} \lambda(c)\quad\mbox{and}\quad 
\lambda (|\varepsilon^{\prime}\circ (c\circ \varepsilon)|) \underset{w}{\prec} \lambda(c).$$
}
\end{corollary}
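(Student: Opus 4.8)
The plan is to derive both inequalities directly from Lemma~\ref{sb1}, using nothing beyond the commutativity of the Jordan product and the transitivity of weak majorization. The one preliminary observation I would record is that, since $c$ is an idempotent, all of its eigenvalues lie in $\{0,1\}$; in particular $c\geq 0$, so that $|c|=c$ and hence $\lambda(|c|)=\lambda(c)$.

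For the first inequality I would simply apply Lemma~\ref{sb1} with $b:=c$. Because $\varepsilon^2=e$ forces $|\varepsilon|=e$ and thus $\lambda(|\varepsilon|)=\one$, the lemma gives $\lambda(|c\circ\varepsilon|)\wprec\lambda(|c|)*\lambda(|\varepsilon|)=\lambda(|c|)=\lambda(c)$, which is exactly the claim. For the second inequality I would first rewrite $\varepsilon'\circ(c\circ\varepsilon)=(c\circ\varepsilon)\circ\varepsilon'$ by commutativity of $\circ$, and then invoke Lemma~\ref{sb1} a second time, now with $b:=c\circ\varepsilon$ and with $\varepsilon'$ playing the role of $\varepsilon$ (legitimate since $(\varepsilon')^2=e$). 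This yields $\lambda\big(|\varepsilon'\circ(c\circ\varepsilon)|\big)\wprec\lambda(|c\circ\varepsilon|)$. Chaining this with the first inequality $\lambda(|c\circ\varepsilon|)\wprec\lambda(c)$ through the transitivity of $\wprec$ in $\Rn$ then produces $\lambda\big(|\varepsilon'\circ(c\circ\varepsilon)|\big)\wprec\lambda(c)$.

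There is no genuine obstacle here: the corollary is just a two-fold iteration of Lemma~\ref{sb1}, and the only steps deserving a word of justification are the identity $|c|=c$ for an idempotent $c$ and the transitivity of weak majorization. The latter is immediate from the definition, since the partial-sum inequalities $\sum_{i=1}^{k}\lambda_i(\cdot)\leq\sum_{i=1}^{k}\lambda_i(\cdot)$ compose directly; I would note it in passing rather than belabor it.
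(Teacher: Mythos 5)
Your proof is correct and is exactly the argument the paper intends: the corollary is stated there as a direct consequence of Lemma \ref{sb1}, obtained by applying it once with $b:=c$ (using $|c|=c$ for an idempotent) and once with $b:=c\circ\varepsilon$ and $\varepsilon'$ in place of $\varepsilon$, then chaining by transitivity of $\wprec$. Nothing is missing.
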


\begin{lemma}\label{sb3} 
{\it 
For elements $\varepsilon$ and $\varepsilon^\prime$ with $\varepsilon^2=e=(\varepsilon^\prime)^2$ and idempotents $c$ and $c^\prime$ in $\V$, we have
$$\lambda \big (|(\varepsilon^{\prime}\circ c') \circ (c\circ \varepsilon)|\big ) \underset{w}{\prec} \lambda(c)*\lambda(c').$$
}
\end{lemma}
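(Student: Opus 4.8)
The plan is to reduce the asserted weak-majorization to two scalar bounds on the element $X:=(\varepsilon'\circ c')\circ(c\circ\varepsilon)$, and then exploit the fact that the eigenvalue vectors of idempotents are $0$--$1$ vectors. Write $r$ and $r'$ for the ranks of $c$ and $c'$, so that $\lambda(c)=(1,\dots,1,0,\dots,0)$ with $r$ ones, and similarly for $\lambda(c')$; hence $\lambda(c)*\lambda(c')$ has exactly $\min(r,r')$ ones followed by zeros, and $\sum_{i=1}^k\big(\lambda(c)*\lambda(c')\big)_i=\min(k,r,r')$. Consequently the claim $\lambda(|X|)\wprec\lambda(c)*\lambda(c')$ is equivalent to the family of inequalities $S_k(|X|)\le\min(k,r,r')$ for $1\le k\le n$. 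Since $|X|\ge 0$, the partial sums $S_k(|X|)$ are nondecreasing in $k$, so it suffices to prove (A) $\lambda_1(|X|)\le 1$, which yields $S_k(|X|)\le k\,\lambda_1(|X|)\le k$, and (B) $\tr(|X|)\le\min(r,r')$, which yields $S_k(|X|)\le S_n(|X|)=\tr(|X|)\le\min(r,r')$. Together these give $S_k(|X|)\le\min(k,r,r')$, as needed.

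Bound (A) is routine: applying Item $(vi)$ of Proposition \ref{prop3} twice gives $\lambda_1(|X|)=\norm{X}_\infty\le\norm{\varepsilon'\circ c'}_\infty\,\norm{c\circ\varepsilon}_\infty$, and each factor is at most $1$ because $\norm{\varepsilon'\circ c'}_\infty\le\norm{\varepsilon'}_\infty\norm{c'}_\infty=1$ (an involution has $|\varepsilon'|=e$, an idempotent has $\lambda_1\le 1$), and similarly $\norm{c\circ\varepsilon}_\infty\le 1$.

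The substance of the proof is Bound (B). First I would write $|X|=\eta\circ X$, where $\eta$ is the sign element of $X$ (operator commuting with $X$, with eigenvalues in $\{-1,0,1\}$, so $\norm{\eta}_\infty\le 1$). Using that the trace inner product is compatible with the Jordan product -- i.e. the multiplication operators are self-adjoint, $\ip{u\circ v}{w}=\ip{v}{u\circ w}$ -- I obtain $\tr(|X|)=\ip{\eta\circ X}{e}=\ip{X}{\eta}$, and then move the factors of $X$ across the inner product to isolate $c$:
$$\tr(|X|)=\ip{(\varepsilon'\circ c')\circ(c\circ\varepsilon)}{\eta}=\ip{c}{\varepsilon\circ\big((\varepsilon'\circ c')\circ\eta\big)}.$$
Now I would apply the Fan--Theobald--von Neumann inequality (Item $(ii)$ of Proposition \ref{prop3}) in the form $\ip{c}{g}\le\ip{\lambda(|c|)}{\lambda(|g|)}$ with $g:=\varepsilon\circ\big((\varepsilon'\circ c')\circ\eta\big)$; since $\lambda(|c|)=\lambda(c)$ is the $0$--$1$ vector with $r$ ones, the right-hand side collapses to the partial sum $S_r(|g|)$. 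Finally $S_r(|g|)\le r\,\lambda_1(|g|)=r\,\norm{g}_\infty\le r$, because Item $(vi)$ of Proposition \ref{prop3} gives $\norm{g}_\infty\le\norm{\varepsilon}_\infty\norm{\varepsilon'\circ c'}_\infty\norm{\eta}_\infty\le 1$. This proves $\tr(|X|)\le r$. Since the Jordan product is commutative, $X=(c\circ\varepsilon)\circ(\varepsilon'\circ c')$, and running the identical argument with the roles of $(c,\varepsilon)$ and $(c',\varepsilon')$ interchanged gives $\tr(|X|)\le r'$; hence $\tr(|X|)\le\min(r,r')$.

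I expect the main obstacle to be Bound (B), and specifically the realization that one should control the single global quantity $\tr(|X|)$ rather than attempt to estimate each $S_k(|X|)$ directly. The key trick that makes this tractable is feeding $\lambda(c)$, a $0$--$1$ vector, into the Fan--Theobald--von Neumann inequality so that the inner product degenerates into a Ky Fan partial sum $S_r$, which can then be bounded crudely by $r$ using the sub-multiplicativity of the spectral $\infty$-norm. One could alternatively peel off the involutions $\varepsilon,\varepsilon'$ using Lemma \ref{sb1} and the Corollary, but the norm estimate above makes this unnecessary.
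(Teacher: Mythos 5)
Your proof is correct, and it shares the paper's overall skeleton: both arguments reduce the weak majorization of $X=(\varepsilon'\circ c')\circ(c\circ\varepsilon)$ against the $0$--$1$ vector $\lambda(c)*\lambda(c')$ to the two scalar bounds $\lambda_1(|X|)\le 1$ and $\tr(|X|)\le\min(r,r')$, and both obtain the trace bound by pairing $X$ with a sign element and invoking the Fan--Theobald--von Neumann inequality. Where you genuinely diverge is in how the trace bound is finished. The paper splits the inner product symmetrically as $\ip{c\circ\varepsilon}{\varepsilon''\circ(\varepsilon'\circ c')}$, applies Fan--Theobald--von Neumann, and then needs Lemma \ref{sb1} (through its corollary) together with Proposition \ref{prop1}$(a)$ to turn $\ip{\lambda(|c\circ\varepsilon|)}{\lambda(|\varepsilon''\circ(\varepsilon'\circ c')|)}$ into the bound $\min(r,r')$ in a single pass. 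You instead move everything except the bare idempotent $c$ to the other side of the inner product, so that Fan--Theobald--von Neumann collapses to the Ky Fan partial sum $S_r(|g|)$, which you bound crudely by $r$ using only the sub-multiplicativity of the spectral $\infty$-norm (Proposition \ref{prop3}$(vi)$); the $\min$ then comes from commutativity and a second, symmetric run of the argument. The trade-off: your route is independent of Lemma \ref{sb1} and Proposition \ref{prop1}$(a)$ entirely (your bound (A) also bypasses the corollary), so the lemma becomes self-contained modulo Proposition \ref{prop3}, whereas the paper's route exploits the already-established Lemma \ref{sb1}, works with sharper intermediate estimates (full weak majorizations rather than sup-norm bounds), and delivers the $\min$ in one computation.
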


\noindent{\bf Proof.} Let $m=\min\{\mbox{rank}(c),\mbox{rank}(c^\prime)\}$. Note that $\lambda(c)*\lambda(c^\prime)$ is a vector in $\Rn$ with ones in the first $m$ slots and 
zeros elsewhere. From the previous corollary, $\lambda_1(|c\circ \varepsilon)|)\leq 1$ and $\lambda_1(|\varepsilon^{\prime}\circ c'|)\leq 1$;  hence, by Item $(vi)$ in Proposition \ref{prop3}, 
$\lambda_1\big (|(\varepsilon^{\prime}\circ c') \circ (c\circ \varepsilon)|\big ) \leq \lambda_1\big (|\varepsilon^{\prime}\circ c'|\big )\, \lambda_1\big (|c\circ \varepsilon)|\big)\leq 1$. Thus,
$$\lambda_i\big (|(\varepsilon^{\prime}\circ c') \circ (c\circ \varepsilon)|\big ) \leq 1, \quad i=1, 2, \ldots, n.$$
Additionally, for some $\varepsilon^{\prime\prime}$ with $(\varepsilon^{\prime\prime})^2=e$,
\begin{eqnarray*}
\tr \big (|(\varepsilon^{\prime}\circ c') \circ (c\circ \varepsilon)|\big )&=&\big \langle [(\varepsilon^{\prime}\circ c') \circ (c\circ \varepsilon)]\circ \varepsilon^{\prime\prime},  e\big \rangle=\big \langle c\circ \varepsilon, \varepsilon^{\prime\prime} \circ(\varepsilon^{\prime}\circ c') \big \rangle \\
                                                                                                &\leq& \big \langle \lambda(c\circ \varepsilon), \lambda(\varepsilon^{\prime\prime} \circ(\varepsilon^{\prime}\circ c')) \big \rangle 
\\
&\leq& \big \langle \lambda\big (|c\circ \varepsilon|\big ), \lambda\big (|\varepsilon^{\prime\prime} \circ(\varepsilon^{\prime}\circ c')|\big ) \big\rangle\\                                                                                                 &\leq& m,
\end{eqnarray*}
where the first two inequalities come from item $(ii)$ in Proposition \ref{prop3} and the last inequality is due to the previous lemma and Item $(a)$ in Proposition \ref{prop1}. 
Hence,
$$\lambda \big (|(\varepsilon^{\prime}\circ c') \circ (c\circ \varepsilon)|\big ) \underset{w}{\prec} \lambda(c)*\lambda(c').$$
\hfill $\Box$

\begin{lemma}\label{sb5} 
{\it 
For any $\varepsilon$ with $\varepsilon^2=e$, idempotent $c$, and $b$ in $\V$, we have
$$\lambda \big (|b\circ(c\circ  \varepsilon)|\big ) \underset{w}{\prec} \lambda(|b|)*\lambda(c).$$
}
\end{lemma}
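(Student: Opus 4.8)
The plan is to reduce the general element $b$ to the ``idempotent-times-sign'' building blocks already controlled in Lemma~\ref{sb3}, by means of a layer-cake (telescoping) decomposition of $|b|$, and then to reassemble the pieces using the subadditivity of the Ky Fan functional $S_k(|\cdot|):=\sum_{i=1}^{k}\lambda_i(|\cdot|)$.

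First I would set up the decomposition. Write the spectral decomposition $|b|=\sum_{i=1}^{n}\beta_i f_i$ with $\beta_1\ge\cdots\ge\beta_n\ge 0$, set $\beta_{n+1}:=0$, and let $\varepsilon'':=\sum_{i=1}^{n}s_i f_i$ with $s_i\in\{+1,-1\}$ chosen so that $s_i\beta_i$ is the $i$-th eigenvalue of $b$ (taking $s_i=1$ when $\beta_i=0$); then $(\varepsilon'')^2=e$ and $b=\varepsilon''\circ|b|$. Introducing the nested idempotents $c_j:=f_1+\cdots+f_j$ of rank $j$ and the nonnegative coefficients $\mu_j:=\beta_j-\beta_{j+1}$, the layer-cake identity $|b|=\sum_{j=1}^{n}\mu_j c_j$ yields, by bilinearity of the Jordan product,
$$b\circ(c\circ\varepsilon)=\sum_{j=1}^{n}\mu_j\,(\varepsilon''\circ c_j)\circ(c\circ\varepsilon).$$
Each summand is precisely of the form handled in Lemma~\ref{sb3} (with $\varepsilon'=\varepsilon''$ and $c'=c_j$), so writing $z_j:=(\varepsilon''\circ c_j)\circ(c\circ\varepsilon)$ we obtain $\lambda(|z_j|)\underset{w}{\prec}\lambda(c)*\lambda(c_j)$ for every $j$.

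Next I would pass from the summands to the sum. Applying positive homogeneity and subadditivity of $S_k(|\cdot|)$ to $b\circ(c\circ\varepsilon)=\sum_j\mu_j z_j$ with $\mu_j\ge 0$, and then the per-summand bound above, gives for each $k$
$$S_k\big(|b\circ(c\circ\varepsilon)|\big)\le\sum_{j=1}^{n}\mu_j\,S_k(|z_j|)\le\sum_{j=1}^{n}\mu_j\sum_{i=1}^{k}\big[\lambda(c)*\lambda(c_j)\big]_i.$$
A short telescoping computation identifies the right-hand side with $\sum_{i=1}^{k}[\lambda(|b|)*\lambda(c)]_i$: since $\lambda(c)*\lambda(c_j)$ has $\min(\mathrm{rank}(c),j)$ leading ones, its top-$k$ sum equals $\min(k,\mathrm{rank}(c),j)$, and summing $\mu_j\min(k,\mathrm{rank}(c),j)$ against the telescoping weights recovers $\sum_{i=1}^{\min(k,\mathrm{rank}(c))}\beta_i$, which is exactly the top-$k$ sum of the already decreasing vector $\lambda(|b|)*\lambda(c)$. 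Since this holds for all $k$, the claimed weak majorization follows. (Note this also correctly specializes to Lemma~\ref{sb1} when $c=e$.)

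The main obstacle is the assembly step, i.e.\ the subadditivity $S_k(|x+y|)\le S_k(|x|)+S_k(|y|)$. This cannot be reduced to an order inequality, since $|x+y|\le|x|+|y|$ fails in general Euclidean Jordan algebras. I would instead justify it through the variational description $S_k(|x|)=\max\{\langle x,\,a-b\rangle : a,b\ \text{idempotents},\ a\circ b=0,\ \mathrm{rank}(a)+\mathrm{rank}(b)\le k\}$, which exhibits $S_k(|\cdot|)$ as a (Ky Fan) norm and hence as sublinear; this formula is in the spirit of Item~$(i)$ of Proposition~\ref{prop3} together with the Fan--Theobald--von Neumann inequality in Item~$(ii)$, and the resulting subadditivity is in any case standard in the Euclidean Jordan algebra literature and may simply be cited. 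Everything else—the layer-cake decomposition, the application of Lemma~\ref{sb3}, and the telescoping identity—is routine.
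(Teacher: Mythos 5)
Your proof is correct, but it takes a genuinely different route from the paper's. Both arguments pivot on Lemma \ref{sb3}; the difference is where the general element $b$ gets processed. The paper dualizes: for a sign element $\varepsilon'$ adapted to the spectral decomposition of $b\circ(c\circ\varepsilon)$, it writes $\big\langle |b\circ(c\circ\varepsilon)|, f\big\rangle=\big\langle b,\,(c\circ\varepsilon)\circ(\varepsilon'\circ f)\big\rangle$ for an arbitrary rank-$l$ idempotent $f$ (associativity of the trace form), bounds this by $\sum_{i=1}^{l}\lambda_i(|b|)\,\lambda_i(c)$ using Item $(ii)$ of Proposition \ref{prop3}, Lemma \ref{sb3} applied to the element $(c\circ\varepsilon)\circ(\varepsilon'\circ f)$, and Item $(a)$ of Proposition \ref{prop1}, then concludes with Item $(i)$ of Proposition \ref{prop3}; the element $b$ itself is never decomposed. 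You work primally: $b=\varepsilon''\circ|b|=\sum_j\mu_j(\varepsilon''\circ c_j)$ reduces everything to the pieces $z_j$ covered by Lemma \ref{sb3}, and sublinearity of $S_k(|\cdot|)$ reassembles them; your identities ($b=\varepsilon''\circ|b|$, and the telescoping evaluation $\sum_j\mu_j\min(k,m,j)=\sum_{i=1}^{\min(k,m)}\lambda_i(|b|)$ with $m=\mbox{rank}(c)$) all check out. The cost of your route is exactly the ingredient the paper never needs: the subadditivity $S_k(|x+y|)\le S_k(|x|)+S_k(|y|)$, which, as you rightly note, is not an order-theoretic triviality in a Euclidean Jordan algebra. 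Your variational formula for $S_k(|x|)$ is true and does deliver it --- the upper bound follows from the Fan--Theobald--von Neumann inequality plus the fact that idempotents with $a\circ b=0$ share a Jordan frame (so $\lambda(a-b)$ consists of $\mbox{rank}(a)$ ones, zeros, and $\mbox{rank}(b)$ negative ones), and equality is attained by choosing $a,b$ from a Jordan frame of $x$ --- and alternatively subadditivity follows from the Ky Fan majorization $\lambda(x+y)\prec\lambda(x)+\lambda(y)$ (available in \cite{baes} or \cite{tao et al}) combined with Item $(b)$ of Proposition \ref{prop1} and elementary rearrangement facts; since the paper nowhere records this fact, you should write out one of these short arguments rather than cite it ``in spirit''. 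What your approach buys is a clean, reusable reduction --- any weak-majorization bound established for signed idempotents $\varepsilon''\circ c_j$ extends automatically to all of $\V$ by sublinearity --- while the paper's dualization is shorter, runs entirely on results already in its preliminaries, and is the same template it reuses to prove Theorem \ref{weak majorization theorem} itself.
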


\noindent{\bf Proof.} We write the spectral decomposition  $b\circ(c\circ  \varepsilon)=\sum_{i=1}^n \lambda_ie_i$. As the conclusion of the lemma remains the same if $b$ is replaced by $-b$, we may assume that some $\lambda_i$ is nonnegative. Without loss of generality, let 
$\lambda_i\geq 0$, for
$i=1, 2, \ldots, k$ and $\lambda_j < 0$ for $j=k+1, k+2, \ldots, n$, where $k\in \{1,2,\ldots, n\}$. (This includes the possibility that $k=n$, in which case, there is no $\lambda_j<0$.) Define 
$\varepsilon^{\prime}:=\sum_{i=1}^n \varepsilon_i^{\prime} e_i$, where $\varepsilon_i^{\prime}=1$ for $i=1, 2, \ldots, k$ and $\varepsilon_j^{\prime}=-1$ for $j=k+1, k+2, \ldots, n$.
Then, for any idempotent $f$ of rank $l$, that is, $f\in \J^{(l)}(\V)$, we have
\begin{eqnarray*}
\big \langle |b\circ(c\circ \varepsilon)|,  f\big \rangle=\big \langle (b\circ(c\circ \varepsilon))\circ \varepsilon^{\prime},  f\big \rangle &=& \langle b,  (c\circ \varepsilon) \circ (\varepsilon^{\prime}\circ f)\rangle \\
&\leq&  \langle \lambda(b),  \lambda((c\circ \varepsilon) \circ (\varepsilon^{\prime}\circ f)) \rangle \\
        &\leq& \langle \lambda(|b|),  \lambda(|(c\circ \varepsilon) \circ (\varepsilon^{\prime}\circ f)|)\rangle \\
        &\leq& \langle \lambda(|b|), \lambda(c)*\lambda(f) \rangle\\
        &=& \langle \lambda(|b|)*\lambda(c), \lambda(f) \rangle\\
        &=& \sum_{i=1}^{l}(\lambda(|b|)*\lambda(c))_i,
\end{eqnarray*}
where the first two inequalities come from item $(ii)$ in Proposition \ref{prop3}  and the  last inequality is from  the previous Lemma. Now, taking the maximum over $f\in \J^{(l)}(\V)$ and using Item $(i)$ in Proposition \ref{prop3},
we get 
$$\sum_{i=1}^{l}\lambda_i\big (|b\circ(c\circ  \varepsilon)|\big )\leq \sum_{i=1}^{l}(\lambda(|b|)*\lambda(c))_i=\sum_{i=1}^{l}\lambda_i(|b|)\,\lambda_i(c),$$
that is,
$$\lambda (|b\circ(c\circ  \varepsilon)|) \underset{w}{\prec} \lambda(|b|)*\lambda(c).$$
\hfill $\Box$

\gap

\noindent{\bf Proof of Theorem \ref{weak majorization theorem}.} 
We write the spectral decomposition $a \circ b =\sum_{i=1}^n \lambda_i(a\circ b)\, e_i$. As the conclusion of the theorem remains the same if $b$ is replaced by $-b$, we may assume that some $ \lambda_i(a\circ b)$ is nonnegative. Without loss of generality, we assume that for some $k\in \{1,2,\dots,n\}$, 
$\lambda_i (a \circ b) \geq 0$, for
$i=1, 2, \ldots,k $ and $\lambda_j (a \circ b) < 0$ for $j=k+1, k+2, \ldots, n$. ((This includes the possibility that $k=n$.) Define 
$\varepsilon:=\sum_{i=1}^n \varepsilon_i e_i$, where $\varepsilon_i=1$ for $i=1, 2, \ldots, k$ and $\varepsilon_j=-1$ for $j=k+1, k+2, \ldots, n$.
We fix an index $l$, $1\leq l\leq n$, and $c\in \J^{(l)}(\V)$. Then,
\begin{eqnarray*}
\big \langle |a \circ b|,  c\big \rangle              &= &\big \langle (a \circ b) \circ \varepsilon,  c\big \rangle \\
                                                 &= &\big \langle a \circ b,  c \circ \varepsilon \big \rangle \\
                                             &= &\big \langle a,  b\circ (c \circ \varepsilon) \big \rangle \\
                                             &\leq &\big \langle \lambda(|a|),  \lambda(|b\circ (c \circ \varepsilon)|) \big \rangle \\
                                              &\leq  &\big \langle \lambda(|a|),  \lambda(|b|)*\lambda(c) \big \rangle\\
                                              &=& \sum_1^l\lambda_i(|a|)\lambda_i(|b|),
\end{eqnarray*}
where the last inequality is due to  Lemma \ref{sb5} and Item $(a)$ of Proposition \ref{prop1}.
Taking the maximum over all such $c$ and using Item $(i)$ in Proposition \ref{prop3}, we see that 
$$\sum_1^l\lambda_i (|a\circ b|) \leq \sum_1^l\lambda_i(|a|)\lambda_i(|b|).$$
This gives the inequality (\ref{sb4}). 
\hfill $\Box$

\gap

The following example show that the inequalities $\lambda(\abs{a \circ b}) \prec_w \lambda(\abs{a} \circ \abs{b})$ and $\lambda(\abs{a} \circ \abs{b}) \prec_w \lambda(\abs{a \circ b})$ need not hold.

\begin{example}
        In $\mathcal{S}^2$, consider two matrices
        \[ A = \begin{bmatrix} \,8 & 3\, \\ \,3 & 0\, \end{bmatrix} \quad \text{and} \quad B = \begin{bmatrix} \,0 & 3\, \\ \,3 & 8\, \end{bmatrix}. \]
        Then, a direct (or Matlab) calculation shows that $\lambda(\abs{A \circ B}) = (33,\, 15)$ and $\lambda(\abs{A} \circ \abs{B}) = (44.52,\, -3.48)$. Clearly,  the inequalities
$\lambda(\abs{A \circ B}) \prec_w \lambda(\abs{A} \circ \abs{B})$ and $\lambda(\abs{A} \circ \abs{B}) \prec_w \lambda(\abs{A \circ B})$ do not hold.
\end{example}

\gap

\noindent{\bf Remarks.} Theorem \ref{diag majorization} shows that the inequality 
$\lambda(|A\bullet b|)\underset{w}{\prec} \lambda(|{\mathrm{diag}}(A)|)*\lambda(|b|)$  holds for any   $A\in \Sn$ that is positive semidefinite.  
Now, the inequality $\lambda\big (|a\circ b|\big )\underset{w}{\prec} \lambda(|a|)*\lambda(|b|)$ can be viewed as $\lambda(|A\bullet b|)\underset{w}{\prec} \lambda({|\mathrm{diag}}(A)|)*\lambda(|b|)$, where $a$ has the spectral decomposition $a=a_1e_1+a_2e_2+\cdots+a_ne_n$, $A=[\frac{a_i+a_j}{2}]$, and the Schur product $A\bullet b$ is defined relative to the Jordan frame
$\{e_1,e_2,\ldots, e_n\}$. However, this $A$ is not, in general, positive semidefinite. Motivated by this observation/result, we raise the following \\

{\bf Problem}: {\it Characterize $A\in \Sn$ such that $\lambda(|A\bullet b|)\underset{w}{\prec} \lambda(|{\mathrm{diag}}(A)|)*\lambda(|b|)$ holds for all $b$. }\\

A related weaker problem could be: {\it Characterize $A\in \Sn$ such that $\lambda(A\bullet b)\underset{w}{\prec} \lambda(|{\mathrm{diag}}(A)|)*\lambda(b)$ holds for all $b\geq 0$. }

\section{A generalized H\"{o}lder type inequality and norms of Lyapunov and quadratic representations}
Recall that for $p\in [1,\infty]$, the spectral $p$-norm on $\V$ is given by $||x||_p:=||\lambda(x)||_p$, where the latter norm is computed in $\Rn$. When numbers $r,s\in [1,\infty]$ are conjugates,  that is, when $\frac{1}{r}+\frac{1}{s}=1$, 
the following H\"{o}lder type inequality holds \cite{gowda-holder}:
$$||x\circ y||_1\leq ||x||_r\,||y||_s\quad (x,y\in \V).$$
It was conjectured in \cite{gowda-sznajder}, Page 9, that a generalized version, namely, 
$$||x\circ y||_p\leq ||x||_r\,||y||_s\quad (x,y\in \V)$$ holds for  $p,r,s\in [1,\infty]$ with $\frac{1}{p}=\frac{1}{r}+\frac{1}{s}$.
In what follows, we  settle this conjecture in the affirmative. 
\\
For  a linear transformation $T:(\V, ||\cdot||_r)\rightarrow (\V, ||\cdot||_s)$, we define the corresponding norm by  
$$||T||_{r\rightarrow s}:=\sup_{0\neq x\in \V}\frac{||T(x)||_s}{||x||_r}.$$
The problem of finding these norms for $L_a$ and $P_a$ has been
addressed in two recent papers \cite{gowda-holder, gowda-sznajder}, where only partial results were given. As a consequence of the following result, 
 we give a complete description of the norms of $L_a$ and $P_a$  relative to two spectral norms.

\begin{theorem}\label{holder type theorem for A}
{\it Given $A\in \Sn$ and  a Jordan frame in $\V$, we consider the Schur product $A\bullet x$ for any $x\in \V$ and define the linear transformation $D_A$ on $\V$ by 
$D_A(x):=A\bullet x$. Suppose the inequality
\begin{equation}\label{majorization assumption}
\lambda \big (|A\bullet x|\big )\underset{w}{\prec} \lambda \big (|\mathrm{diag}(A)|\big )*\lambda\big (|x|\big ) 
\end{equation}
holds for all $x\in \V$. Let $p,r,s\in [1,\infty]$ with $\frac{1}{p}=\frac{1}{r}+\frac{1}{s}$. Then,  
\begin{equation} \label{holder condition}
||A\bullet x||_p\leq ||\mathrm{diag}(A)||_r\,||x||_s\quad (x\in \V).
\end{equation}
Consequently,
$$||D_A||_{r\rightarrow s}=\left \{
\begin{array}{lll}
||\mathrm{diag}(A)||_\infty & if & r\leq s,\\
||\mathrm{diag}(A)||_{\frac{rs}{r-s}} & if &  s <r.
\end{array}
\right .
$$
In particular, the above conclusions hold when $A$ is positive semidefinite.
}
\end{theorem}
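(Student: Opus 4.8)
The target is Theorem~\ref{holder type theorem for A}, which has three linked conclusions: the norm inequality \eqref{holder condition}, the exact formula for $\|D_A\|_{r\to s}$, and the specialization to positive semidefinite $A$. My plan is to derive \eqref{holder condition} from the majorization hypothesis \eqref{majorization assumption} using a standard majorization-plus-H\"older argument in $\Rn$, then obtain the norm formula by combining \eqref{holder condition} (which gives the upper bound on $\|D_A\|_{r\to s}$) with a matching lower bound produced by a well-chosen test element.

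\emph{Step 1: from majorization to the norm inequality.} Set $d:=\mathrm{diag}(A)$. The hypothesis \eqref{majorization assumption} says $\lambda(|A\bullet x|)\wprec \lambda(|d|)*\lambda(|x|)$. Since the spectral $p$-norm is $\|y\|_p=\|\lambda(|y|)\|_p=\|\lambda(|y|)\|_p$, and since the $p$-norm is a symmetric gauge function (monotone under weak majorization of the absolute-value vectors), I would first note
\[
\|A\bullet x\|_p=\|\lambda(|A\bullet x|)\|_p\le \big\|\,\lambda(|d|)*\lambda(|x|)\,\big\|_p,
\]
the inequality being exactly the statement that weak majorization implies $\ell_p$-norm domination for nonnegative vectors. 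Then the right-hand side is a pure $\Rn$ computation: for nonnegative vectors $u,v\in\Rn$ and $\tfrac1p=\tfrac1r+\tfrac1s$, the classical H\"older inequality gives $\|u*v\|_p\le \|u\|_r\,\|v\|_s$. Applying this with $u=\lambda(|d|)$ and $v=\lambda(|x|)$ yields $\|A\bullet x\|_p\le \|d\|_r\,\|x\|_s$, which is \eqref{holder condition}.

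\emph{Step 2: the exact operator norm.} Inequality \eqref{holder condition} immediately gives $\|D_A\|_{r\to s}\le \|d\|_\infty$ when we read it with the appropriate exponents; more precisely, I would split into the two regimes. When $r\le s$, I expect the extremal behavior to be governed by $\|d\|_\infty$: the upper bound follows from the operator-commuting/Schur structure (taking $x=e_i$ for the index realizing $\|d\|_\infty$ makes $A\bullet x$ a scalar multiple of a single Peirce block, saturating the ratio $\|A\bullet x\|_s/\|x\|_r$). When $s<r$, the conjugate exponent $\tfrac{rs}{r-s}$ (which is exactly the $t$ with $\tfrac1r+\tfrac1t=\tfrac1s$) enters, and the upper bound comes from \eqref{holder condition} applied with $p=s$ together with a reverse-H\"older/duality choice of $x$ supported on the coordinates where $|d|$ is large. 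The matching lower bound in each case comes from testing $D_A$ on a carefully chosen diagonal element $x=\sum_i \xi_i e_i$ with $\xi_i$ chosen to equalize the H\"older inequality, so that $A\bullet x=\sum_i d_i\xi_i e_i$ and the ratio $\|A\bullet x\|_s/\|x\|_r$ reaches $\|d\|_{rs/(r-s)}$ (resp.\ $\|d\|_\infty$).

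\emph{Main obstacle.} The routine part is Step~1; the delicate part is the \emph{sharpness} in Step~2, i.e.\ constructing test elements that saturate both regimes simultaneously and verifying no off-diagonal Peirce contribution can exceed the diagonal-driven bound. The final sentence (positive semidefinite $A$) is then immediate, since Theorem~\ref{diag majorization} establishes that \eqref{majorization assumption} holds for all $x$ whenever $A$ is positive semidefinite, so the hypothesis is automatically satisfied and all conclusions apply.
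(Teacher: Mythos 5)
Your proposal is correct and follows essentially the same path as the paper's own proof: the majorization hypothesis plus Proposition \ref{prop1}(c) (weak majorization gives $\ell_p$-domination for nonnegative vectors) and the classical H\"older inequality in $\Rn$ yield \eqref{holder condition}; the norm formula then comes from \eqref{holder condition} read with suitable exponents (together with $\|x\|_s\leq\|x\|_r$ when $r\leq s$) for the upper bounds, matched by diagonal test elements --- $x=e_i$ when $r\leq s$, and $x=\sum_i|a_{ii}|^{t/r}\,\mathrm{sgn}(a_{ii})\,e_i$ with $t=\tfrac{rs}{r-s}$ when $s<r$, exactly your ``equalize H\"older'' choice --- for the lower bounds, with the positive semidefinite case disposed of by Theorem \ref{diag majorization} just as you say. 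The only blemish is a wording slip in your regime-by-regime discussion, where the test-element constructions are twice described as giving the \emph{upper} bounds; they of course give the matching \emph{lower} bounds (a test vector can only bound an operator norm from below), as the opening and closing sentences of your Step 2 correctly indicate.
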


\begin{proof} Consider a fixed nonzero $x\in \V$.
To simplify the notation, let $z:=\lambda \big (|A\bullet x|\big )$, $u:=\lambda \big (|\mathrm{diag}(A)|\big )$, and $v:=\lambda\big (|x|\big )$, which are nonnegative vectors (with decreasing entries) in $\Rn$. Then, by our assumption, $z\underset{w}{\prec} u*v$ in $\Rn$. We consider two cases:\\
\noindent{\it Case 1:} $p=\infty$ (so $r=s=\infty$).\\  Then, by comparing the first components in $u,v,z$, we have $z_1\leq u_1\,v_1$, that is, $||z||_\infty\leq ||u||_\infty\,||v||_\infty$, where the norms are computed in $\Rn$. 
This gives, $||A\bullet x||_\infty\leq ||\mathrm{diag}(A)||_\infty\,||x||_\infty.$
\\
\noindent{\it Case 2:} $p<\infty$. \\
Then,
$$||z||_p\leq ||u*v||_p\leq ||u||_r\,||v||_s,$$
where the first inequality comes from Item $(c)$ in Proposition \ref{prop1} by considering the increasing convex function $t\mapsto t^p$ and the second inequality comes from  the classical generalized H\"{o}lder's inequality in $\Rn$. From this, we get $||A\bullet x||_p\leq ||\mathrm{diag}(A)||_r\,||x||_s.$ \\
Thus we have proved (\ref{holder condition}) in both cases.\\

Now for the computation of $||D_A||_{r\rightarrow s}.$ Let $\{e_1,e_2,\ldots, e_n\}$ be the Jordan frame that defines our Schur products. Let $A=[a_{ij}]$.
We consider two cases:\\
\noindent{\it Case $(i)$:} $r\leq s$. \\
Then,
$$\frac{||A\bullet x||_s }{||x||_r}\leq \frac{||A\bullet x||_s }{||x||_s}\leq ||\mathrm{diag}(A)||_\infty,$$
where the first inequality is due to the fact that $r\leq s\Rightarrow ||x||_s\leq ||x||_r$ (because, in $\Rn$, for a fixed vector $v$, the norm $||v||_t$ decreases in $t$) and the second inequality is due to the fact that $||z||_s\leq ||u*v||_s\leq ||u||_\infty\,||v||_s$ in $\Rn$.
Thus, 
$$||D_A||_{r\rightarrow s}\leq ||\mathrm{diag}(A)||_\infty.$$
To see the reverse inequality, we observe that  
$A\bullet e_i=a_{ii}e_i$ for all $1\leq i\leq n$. As $1$ is the only nonzero eigenvalue of $e_i$, $||e_i||_r=1$  and so, $|a_{ii}|=\frac{||A\bullet e_i||_s}{||e_i||_r}$ for all $i$. Thus,
$$||\mathrm{diag}(A)||_\infty \leq \max_{1\leq i\leq n}\frac{||A\bullet e_i||_s}{||e_i||_r}\leq ||D_A||_{r\rightarrow s}.$$
Hence,
$$||D_A||_{r\rightarrow s}=||\mathrm{diag}(A)||_\infty.$$
\noindent{\it Case $(ii)$:} $s<r$ (so $s<\infty$).\\
We define $t\in [1,\infty)$ by $\frac{1}{s}=\frac{1}{t}+\frac{1}{r}$ so that $t=\frac{rs}{r-s}$ (which is taken to be $1$ when $r=\infty$). Then, by an application of (\ref{holder condition}), $||A\bullet x||_s\leq ||\mathrm{diag}(A)||_t\,||x||_r$. This implies 
$$\frac{||A\bullet x||_s }{||x||_r}\leq \frac{||\mathrm{diag}(A)||_t\,||x||_r }{||x||_r}= ||\mathrm{diag}(A)||_t.$$
From this, we get 
$$||D_A||_{r\rightarrow s}\leq ||\mathrm{diag}(A)||_t.$$ As this turns into equality when $\mathrm{diag}(A)=0$, we  prove the reverse inequality by assuming $\mathrm{diag}(A)\neq 0$.
In this setting,  let  
$x=\sum_{i=1}^{n}|a_{ii}|^{\frac{t}{r}}(sgn\,a_{ii})\,e_i$ (using the convention $0^0=1$, if needed, when $r=\infty$), where $sgn\,a_{ii}$ denotes the sign of $a_{ii}$. Then, $A\bullet x=\sum_{i=1}^{n} |a_{ii}|^{\frac{t}{r}+1}e_i$. So,
$||A\bullet x||_s=||\mathrm{diag}(A)||_{t}^{\frac{t}{s}}.$ Also, $||x||_r=||\mathrm{diag}(A)||_{t}^{\frac{t}{r}}.$ Thus, for this $x$,
$$\frac{||A\bullet x||_s}{||x||_r}=\frac{||\mathrm{diag}(A)||_{t}^{\frac{t}{s}}}{||\mathrm{diag}(A)||_{t}^{\frac{t}{r}}}=||\mathrm{diag}(A)||_{t}.$$ 
It follows that $||D_A||_{r\rightarrow s}\geq ||\mathrm{diag}(A)||_t.$ Hence, 
$$||D_A||_{r\rightarrow s}= ||\mathrm{diag}(A)||_t.$$  
Finally, when $A$ is positive semidefinite, condition (\ref{majorization assumption}) holds thanks to Theorem \ref{diag majorization}. Hence (\ref{holder condition}) 
and norm statements hold in this special case as well.
\end{proof}

We now describe the norms of $L_a$ and $P_a$.

\begin{corollary}
{\it Consider  $p,r,s\in [1,\infty]$ with $\frac{1}{p}=\frac{1}{r}+\frac{1}{s}$. Then the following statements hold for all  $a,b\in \V$:
\begin{itemize}
\item [(i)] $$||a\circ b||_p\leq ||a||_r\,||b||_s\quad\mbox{and}\quad  
||L_a||_{r\rightarrow s}=\left \{
\begin{array}{lcl}
 ||a||_\infty & if & r\leq s,\\
 ||a||_{\frac{rs}{r-s}} & if & s <r.
\end{array}
\right .
$$
\item [(ii)] $$||P_a(b)||_p\leq ||a^2||_r\,||b||_s\quad \mbox{and}\quad
||P_a||_{r\rightarrow s}=\left \{
\begin{array}{lcl}
 ||a^2||_\infty & if & r\leq s,\\
 ||a^2||_{\frac{rs}{r-s}} & if & s <r.
\end{array}
\right .
$$
\end{itemize}
}
\end{corollary}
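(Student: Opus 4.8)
The plan is to deduce both parts directly from Theorem \ref{holder type theorem for A} by realizing $L_a$ and $P_a$ as Schur-product operators $D_A$ for suitable matrices $A$, and then observing that the required weak-majorization hypothesis (\ref{majorization assumption}) is in each case exactly one of the inequalities already established. Throughout I would fix $a$, write its spectral decomposition $a=\sum_{i=1}^n a_ie_i$ relative to a Jordan frame $\{e_1,\ldots,e_n\}$, and take all Schur products relative to this frame.

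For part $(i)$, recall from the two ``primary examples'' that $L_a(x)=A\bullet x$ with $A=[\tfrac{a_i+a_j}{2}]$, so $D_A=L_a$ and $\mathrm{diag}(A)=(a_1,\ldots,a_n)$. Hence $||\mathrm{diag}(A)||_t=||a||_t$ for every $t\in[1,\infty]$, since the spectral $t$-norm of $a$ is by definition the $\ell^t$-norm of its eigenvalue vector. With this identification, the hypothesis (\ref{majorization assumption}) reads $\lambda(|a\circ x|)\wprec\lambda(|a|)*\lambda(|x|)$, which is precisely the content of Theorem \ref{weak majorization theorem}. Applying Theorem \ref{holder type theorem for A} then yields at once the H\"older inequality $||a\circ b||_p\le||a||_r\,||b||_s$ and the two-case formula for $||L_a||_{r\to s}$, after noting that the exponent $\tfrac{rs}{r-s}$ is exactly the one produced there.

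For part $(ii)$, the identification $P_a(x)=A\bullet x$ with $A=[a_ia_j]$ gives $D_A=P_a$ and $\mathrm{diag}(A)=(a_1^2,\ldots,a_n^2)$, which is the eigenvalue vector of $a^2$ because $a^2=\sum_i a_i^2 e_i$; thus $||\mathrm{diag}(A)||_t=||a^2||_t$. Here the matrix $A=[a_ia_j]$ is the rank-one positive semidefinite matrix $vv^{\T}$ with $v=(a_1,\ldots,a_n)^{\T}$, so the hypothesis (\ref{majorization assumption}) holds either by the positive-semidefinite clause of Theorem \ref{holder type theorem for A} (via Theorem \ref{diag majorization}) or, equivalently, as the absolute-value instance of Theorem \ref{Pa weak-majorization}. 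Theorem \ref{holder type theorem for A} then delivers $||P_a(b)||_p\le||a^2||_r\,||b||_s$ together with the stated norm formula for $P_a$.

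The only genuinely delicate point is part $(i)$: the matrix $A=[\tfrac{a_i+a_j}{2}]$ is in general \emph{not} positive semidefinite, as the Remarks after Theorem \ref{weak majorization theorem} emphasize, so the convenient ``in particular'' clause of Theorem \ref{holder type theorem for A} is unavailable and the weak-majorization hypothesis must be supplied by hand through Theorem \ref{weak majorization theorem}. Everything else is bookkeeping: matching $\mathrm{diag}(A)$ with $a$ (respectively with $a^2$), checking that these coincide with the eigenvalue vectors whose $\ell^t$-norms give the spectral norms $||a||_t$ and $||a^2||_t$, and transcribing the resulting two-case expression for the operator norm.
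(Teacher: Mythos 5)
Your proposal is correct and follows essentially the same route as the paper: realize $L_a$ and $P_a$ as Schur-product operators $D_A$ with $A=[\tfrac{a_i+a_j}{2}]$ and $A=[a_ia_j]$ respectively, supply hypothesis (\ref{majorization assumption}) via Theorem \ref{weak majorization theorem} for $L_a$ and via positive semidefiniteness (or Theorem \ref{Pa weak-majorization}) for $P_a$, and then invoke Theorem \ref{holder type theorem for A} with $\mathrm{diag}(A)$ identified with $\lambda(a)$, respectively $\lambda(a^2)$. Your explicit remark that $[\tfrac{a_i+a_j}{2}]$ is generally not positive semidefinite, so that part $(i)$ genuinely requires Theorem \ref{weak majorization theorem}, matches exactly what the paper does (and spells out the detail the paper compresses into ``a similar argument'' for part $(ii)$).
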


\begin{proof}
We prove Item $(i)$. For a fixed $a\in \V$, we consider its spectral decomposition $a=a_1e_1+a_2e_2+\cdots+a_ne_n$ and define all Schur products relative to $\{e_1,e_2,\ldots, e_n\}$. Then, with $A=[\frac{a_i+a_j}{2}]$, $L_a(x)=A\bullet x$ for all $x\in \V$. Since 
 condition (\ref{majorization assumption}) holds (thanks to Theorem \ref{weak majorization theorem}), we can apply Theorem \ref{holder type theorem for A}. As
 the diagonal entries  of $A$ are $a_1,a_2,\ldots, a_n$, which are the eigenvalues of $a$, we  get both the items in $(i)$. Based on the matrix $A=[a_ia_j]$, a similar argument gives $(ii)$.
\end{proof}


\section*{Acknowledgment}
The second author was financially supported by the National Research Foundation of Korea NRF-2016R1A5A1008055.


\end{document}